\newtheorem{lemma}{Lemma}[section]
\newtheorem{prop}{Proposition}[section]
\newtheorem{teo}{Theorem}[section]
\newtheorem{rem}{Remark}[section]
\newenvironment{proofof}[2]{\begin{proof}[Proof of #1 \ref{#2}.]}{\end{proof}}
\newcommand{\be}{\begin{equation}}
\newcommand{\ee}{\end{equation}}
\newcommand{\bes}{\begin{equation*}}
\newcommand{\ees}{\end{equation*}}
\newcommand{\ud}{\mathrm{d}}
\newcommand{\BS}[2]{S_{#2}\left(#1\right)}
\newcommand{\G}{\mathscr{G}}
\newcommand{\h}[1]{\hat{#1}}
\newcommand{\C}[1]{\mathcal{C}{([#1]) }}
\newcommand{\hC}[1]{\h{\mathcal{C}}{([#1]) }}
\newcommand{\Ci}{\mathcal{C}}
\newcommand{\st}{\, : \, }
\newcommand{\Cn}{\mathscr{C}_n}
\newcommand{\Cp}[1]{\mathcal{C}{([#1]) }}
\newcommand{\Cnp}{\mathscr{C}^+_n}
\begin{document}

\title{Renewal-type Limit Theorem for the Gauss Map and Continued Fractions\footnote{To appear in \emph{Ergodic Theory and Dynamical Systems}.}}

\author{YAKOV G.~SINAI and CORINNA ULCIGRAI}
\date{$18$ February $2007$}

\maketitle

\begin{center}{\it Mathematics Department, Princeton University, Fine Hall, Washington road, 08544-1000 Princeton, NJ, USA\\
(sinai@math.princeton,edu, ulcigrai@math.princeton.edu) }
\end{center}

\begin{abstract} 
In this paper we prove the following renewal-type limit theorem. Given $\alpha \in (0,1)\backslash \mathbb{Q}$ and $R>0$,  let $q_{n_R}$ be the first denominator of the convergents of $\alpha$ which exceeds $R$.
The main result in the paper is that the ratio $q_{n_R}/R$ has a limiting distribution as $R$ tends to infinity. The existence of the limiting distribution uses mixing of a special flow over the natural extension of the Gauss map.  
\end{abstract}

\begin{flushright}
{\it Dedicated to the memory of Bill Parry. }
\end{flushright}
\vspace{6mm}
\section{Introduction.}
\subsection{Main Result.}
For $\alpha\in (0,1)\backslash \mathbb{Q}$, denote the continued fraction expansion of $\alpha$ by
\bes
\alpha = \frac{1}{a_1 + \frac{1}{a_2 + \frac{1}{\dots}}} = [a_1, a_2, \dots , a_n, \dots ]
\ees
where $a_n \in \mathbb{N}_+$ are the entries of the continued fraction and $\{ p_n/q_n \}_{n\in \mathbb{N}_+}$ are the convergents of $\alpha$, i.e. $ p_n/q_n  =  [a_1, a_2, \dots , a_n ]$ with $(p_n,q_n)=1$ . 

We prove the following theorem. 
\begin{teo}\label{main}
Given $R>0$, introduce
\bes
n_R = \min \{ n \in \mathbb{N} \, | \, \,  q_n > R \} .
\ees
Fix also $N\geq0$. Then the ratio $\frac{q_{n_R}}{R}$ and the entries $a_{n_R-k}$ for $0\leq k < N$ 
 have a joint limiting distribution, as $R$ tends to infinity, with respect to the Gauss measure $\mu_{1}$ given by the density $\frac{\ud \mu_1 }{\ud \alpha}=(\ln 2 (1+\alpha))^{-1}$.
\end{teo}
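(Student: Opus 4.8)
The natural framework is to encode the problem as a first-return / special-flow question over the natural extension of the Gauss map. Recall that the natural extension $\bar G$ of the Gauss map acts on $[0,1]^2$ (or on a sequence space), keeping track of the "past" $[a_1,a_2,\dots]$ and the "future" $[0;a_0,a_{-1},\dots]$, and that $\log q_n$ grows asymptotically linearly: more precisely, $\log q_n = \sum_{k=1}^n \log\|\cdot\|$ is a Birkhoff sum of a roof function $\tau$ over $\bar G$, up to a bounded error coming from the fact that $q_n$ is not exactly a multiplicative cocycle but satisfies $q_n = a_n q_{n-1}+q_{n-2}$. The first step is therefore to make precise the identification: writing $R = e^t$, the index $n_R$ is exactly the number of the roof-level crossed by the special flow built over $\bar G$ with roof function $\tau$ (the asymptotic exponential growth rate of the denominators) at time $t$, and $\log(q_{n_R}/R)$ is the overshoot of the Birkhoff sum past level $t$, i.e. the height coordinate in the special flow at time $t$ measured from the "ceiling." I would spell out the cocycle relation and absorb the $O(1)$ discrepancy between $\log q_n$ and the genuine Birkhoff sum into the argument carefully, since the whole theorem is about an $O(1)$-scale quantity.

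The core of the argument is then a renewal/equidistribution statement for special flows. Let $(X,\bar\mu)$ be the natural extension with its invariant measure, $\tau$ the roof, and $(\Phi_t)$ the special flow on $X^\tau = \{(x,s): 0\le s<\tau(x)\}$ with normalized invariant measure $\bar\mu^\tau$. The pair $(q_{n_R}/R, a_{n_R}, a_{n_R-1},\dots,a_{n_R-N+1})$ is, after the above identification, a fixed continuous function evaluated at the point $\Phi_t(x,0)$ where $x$ ranges over the fibre of a given $\alpha$; the entries near $n_R$ are read off from the base point's coordinates, and the ratio is read off from the height coordinate. So the theorem reduces to: for $\bar\mu$-a.e.\ starting fibre (and we then push forward to $\mu_1$ on $\alpha$), the distribution of $\Phi_t(x,0)$ converges, as $t\to\infty$, to $\bar\mu^\tau$. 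This is exactly where mixing of the special flow enters: one uses that $(\Phi_t)$ is mixing (this is the ingredient the abstract promises), plus the fact that the initial condition $(x,0)$ sits on the zero-section, to run the standard "equidistribution of a transversal under a mixing flow" argument — approximate the section by a thin flow-box, use mixing to spread that box uniformly, and let the box shrink. I expect the cleanest route is to prove convergence of $\int f(\Phi_t(x,0))\,d\nu(x)$ for test functions $f$ and for a suitable class of measures $\nu$ on the zero-section (in particular those supported on a single $\alpha$-fibre, or smooth transverse measures), and identify the limit as $\int f\,d\bar\mu^\tau$.

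Concretely I would carry out the steps in this order: (i) set up the natural extension, the roof function $\tau$, and prove $\log q_n(\alpha) = \tau^{(n)}(x) + O(1)$ with explicit control, where $\tau^{(n)}$ is the $n$-th Birkhoff sum; (ii) translate $\{q_{n_R}/R,\ a_{n_R-k}\}$ into a fixed function $F$ on $X^\tau$ composed with $\Phi_{\log R}$ restricted to the zero-section, handling the $O(1)$ error by an approximation/continuity argument; (iii) prove the transversal equidistribution lemma: for the mixing special flow $(\Phi_t)$, and for measures on the base that are absolutely continuous along the unstable/expanding direction (the $\alpha$-direction), $(\Phi_t)_*$ of the lifted measure converges weakly to $\bar\mu^\tau$; (iv) deduce that $F(\Phi_{\log R}(x,0))$ converges in distribution and compute that the $\alpha$-marginal of the resulting measure is the Gauss measure $\mu_1$ (this is where the density $(\ln 2(1+\alpha))^{-1}$ appears, as the projection of $\bar\mu$).

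The main obstacle, as I see it, is step (iii): making the passage from "mixing" to "equidistribution of the particular non-invariant initial measure living on the zero-section" rigorous. The zero-section is a measure-zero set for $\bar\mu^\tau$, so one cannot directly test mixing against it; one must thicken it in the flow direction and control the error uniformly, which requires some regularity of $\tau$ (it is unbounded, with logarithmic-type singularities coming from large partial quotients) and some hyperbolicity/regularity of the conditional measures of $\bar\mu$ along the $\alpha$-fibres. Handling the unboundedness of the roof $\tau$ — equivalently, the heavy tail of $\log a_n$ — in the flow-box approximation, and ensuring the overshoot $\log(q_{n_R}/R)$ doesn't "escape to infinity" with positive probability (tightness), is the technically delicate point, and is presumably why the hypotheses are arranged as they are in the paper.
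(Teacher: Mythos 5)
Your plan follows essentially the same route as the paper: identify $\ln q_n$ with a Birkhoff sum of the roof $\varphi(\h{\alpha})=-\ln(\h{\G}\h{\alpha})^-$ over the natural extension, read off $\ln(q_{n_R}/R)$ and the entries $a_{n_R-k}$ from $\Phi_{\ln R}$ applied to the zero-section, and convert mixing into equidistribution of that section by thickening it into a thin flow-box and letting the thickness shrink, which is exactly how Lemma \ref{comparing} and the final argument proceed. The only place your outline is weaker than what is actually needed (and what the paper supplies in Lemmas \ref{logqnandBS}--\ref{variationflemma}) is the treatment of the discrepancy: a bare $O(1)$ bound would ruin an $O(1)$-scale limit law, and the proof requires that $\ln q_n - \BS{\varphi}{n}$ converge exponentially to a function $f(\h{\alpha})$ that is nearly constant on long cylinders, so that after partitioning into cylinders one can use the constant, cylinder-dependent time $T=\ln R - f_{\Ci}$ in the mixing step.
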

Theorem \ref{main} means that for each $N\geq0$ there exists a probability measure $P_N$ on $(1,\infty) \times \mathbb{N}_+^N$ such that for all $a, b >1$, $c_k\in \mathbb{N}_+$, $0\leq k < N$,
\be \label{limiting}
\begin{split}
 \mu_1  \{ \alpha  \st a < \frac{q_{n_R}}{R} < b,  \quad a_{n_R-k} = c_k, \, 0\leq k & < N \}  \xrightarrow{R\rightarrow \infty} 
\\& P_N \left( 
(a,b) \times \{ c_0 \} \times \dots \times \{ c_{N-1} \} \right). 
\end{split}
\ee
In  Theorem \ref{main}, instead of $\mu_1$, one can consider any absolutely continuous measure, but we do not dwell on this.

\subsection{Applications.}
Theorem \ref{main} is useful in many applications. As an example, we refer to the two following papers. 
In \cite{BS:lim}, the authors consider the problem of limiting behavior of larges  Frobenius numbers, initially investigated by V.I.~Arnold \cite{Ar:wea}. If $a=(a_1,\dots, a_n)$ is an $n$-tuple of positive integers which are coprime, the Frobenius number $F(a)$ is the smallest $F$ such that any integer $t\geq F$ can be written in the form $t=\sum_{j=1}^n x_j a_j$ where $x_j$ are non-negative integers. Let $\Omega_N$ be the ensemble of all coprime $n$-tuples with entries less than $N$ with the uniform probability distribution.
In the case $n=3$, the existence of the limiting distribution of $\frac{1}{N^{3/2}}F(a)$ as $N$ tends to infinity is proved  using a discrete version of Theorem \ref{main}.

In \cite{SU:lim}, the following trigonometric sums are considered:
\bes
\frac{1}{N}\sum_{n=0}^N  \frac{1}{1- e^{2\pi i (n\alpha + x)} },\qquad (x,\alpha)\in (0,1)\times(0,1),
\ees
where $(0,1)\times(0,1)$ is endowed with the uniform probability distribution. The authors prove that such trigonometric sums (and, more generally, the Birkhoff sums of a function with a singularity of type $1/x$ over a rotation) have a non-trivial joint limiting distribution in $x$ and $\alpha$ as $N$ tends to infinity. 
Also in this case the  proof of the existence of the limiting distribution is based on the existence of the limiting distribution in Theorem \ref{main}.

\subsection{Outline.}
The main idea of the proof of Theorem \ref{main} is to reformulate the problem in terms of a certain special flow over the natural extension of the Gauss map and to exploit mixing of the flow to prove the existence of the limiting distribution. The definitions of the natural extension of the Gauss map and of special flow are recalled in \S \ref{definitionsection}. The reduction to a special flow is shown in \S\ref{reductionsec} and the existence of the limiting distribution is proved in \S\ref{limitingsec}.  The same special flow was considered also in \cite{DS:sta} and the proof that the special flow is mixing is recalled in \S \ref{mixingsection}.
\section{Definitions.}\label{definitionsection}
\subsection{Gauss map.}
Let $\G$ be the Gauss map, i.e. the transformation on $(0,1)$ given by $\alpha\mapsto \G(\alpha)= \{ \frac{1}{\alpha} \}$, where $\{\cdot \} $ denotes the fractional part. The Gauss measure $\mu_1$
 is invariant under $\G$.
 The sequence $\{a_n\}_{n\in \mathbb{N}_+}$ can be seen as a symboling coding for the orbit $\{ \G ^n \alpha \}_{n\in \mathbb{N}}$, since $a_n = [(\G ^{n-1} (\alpha))^{-1}]$ where $[\cdot]$ denotes the integer part. 
A point $\alpha \in (0,1)\backslash \mathbb{Q}$ will be often identified with the infinite sequence $\{a_n\}_{n\in \mathbb{N}_+}$ in $\mathbb{N_+}^{\mathbb{N}_+}$.

For convenience, we will also use the following notation:
\bes
[a_0; a_1, \dots, a_n , \dots ] = a_0 + \frac{1}{a_1 + \frac{1}{ \ddots}}, \qquad [a_0; a_1, \dots, a_n ] = a_0 + \frac{1}{a_1 + \frac{1}{\ddots + \frac{1}{a_n}}}.
\ees

\subsection{Natural extension of the Gauss map.}
The natural extension $\h{\G}$ of the Gauss map $\G$ acts on infinite bi-sided sequences $\{a_n\}_{n\in \mathbb{Z}} \in {\mathbb{N}_+}^ {\mathbb{Z}}$
 as the two-sided shift, i.e. $\h{\G} \{a_n\}_{n\in \mathbb{Z}}=  \{ a'_n \}_{n\in \mathbb{Z}}$ where $a'_n = a_{n+1}$. The map $\h{\G}$ admits the following geometric interpretation. Consider the domain $D(\h{\G}) = (0,1)\backslash {\mathbb{Q}} \times (0,1)\backslash \mathbb{Q}$. Let us identify the sequence $ \{a_n\}_{\mathbb{Z}}$ with the point  $\h{\alpha}  = (\h{\alpha} ^-, \h{\alpha}^+) \in D(\h{\G})$, which is given by
\be\label{alpha+-}
\h{\alpha} ^+ = [a_1, a_2, \dots , a_n , \dots ]; \qquad \h{\alpha}^- = [a_0, a_{-1}, \dots , a_{-n} , \dots ].
\ee
Then  $\h{\G}({\h{\alpha}})= \h{\beta}$ where  $\h{\beta}  = (\h{\beta} ^-, \h{\beta}^+)  $ and
\bes
\h{\beta}^+ = \G ( \h{\alpha}^+ ) = \left\{ \frac{1}{\h{\alpha}^+}\right\} = \frac{1}{\h{\alpha}^+} - a_1; \qquad  \h{\beta}^- = \frac{1}{ \left[ \frac{1}{\h{\alpha}^+}\right]+ \h{\alpha}^- } = \frac{1}{ a_1 + \h{\alpha}^- }
\ees
Clearly, denoting by $\pi$ the projection $\pi( \h{\alpha}) = \h{\alpha}^+$ or equivalently $\pi( \{a_n\}_{n\in \mathbb{Z}}) =  \{a_n\}_{n\in \mathbb{N_+}}  $,  we have $\pi \h{\G}  =  \G \pi$.
The sequence $\{a_n\}_{n\in \mathbb{Z}}$ is the symbolic coding of $\h{\alpha} \in D(\h{\G})$ under $\h{\G}$ in the sense that 
$a_n = \left[ \left( \pi \h{\G}^{n-1} \h{\alpha}\right)^{-1}  \right]$

The map $\h{\G}$ admits a natural invariant probability measure $\mu_2 $ on $D(\h{\G})$ which is given by the density: 
\bes
 \rho_2 (\alpha^-, \alpha^+) = \frac{1}{\ln 2(1+ \alpha^- \alpha^+)^2 }.
\ees
\begin{rem}\label{mu2pimu1}
The Gauss measure $\mu_1$ can be recovered as $\pi_* \mu_2$, i.e. for each measurable set $A\subset (0,1)$, we have $\mu_1(A)= \mu_2(\pi^{-1}A)$.
\end{rem}

Given any  $\h{\alpha} = \{a_n\}_{n\in \mathbb{Z}}  \in D(\h{\G})$,  $\h{\alpha}^-$ and  $\h{\alpha}^+$ will always denote the two components of  $\h{\alpha} \in D(\h{\G})$ which are given explicitly in terms of the $a_n$ by (\ref{alpha+-}). 

Let $q_n=q_n(\h{\alpha }) = q_n({\h{\alpha}^+ })$, $n\in \mathbb{N}_+$, be the sequence of denominators of the convergents of $\h{\alpha}^+ $. Also, given $R>0$, $n_R(\h{\alpha})$ and  $q_{n_R}(\h{\alpha})$ are set equal to the analogous quantities defined for $\alpha=\h{\alpha}^+ $. 
\begin{rem}\label{n_Rfuture} 
By construction, the functions $q_n$ (for any $n\in \mathbb{N_+}$),  $n_R$ and $q_{n_R}$ (for any $R>0$)  on $D(\h{\G})$ 
 are constant on fibers $\pi^{-1}\alpha$, $\alpha\in (0,1)\backslash {\mathbb{Q}}$. 
\end{rem}


\subsection{Cylinders.}\label{cylinderssec}
For $b_k \in \mathbb{N}_+$, $k=1, \dots, n$, denote  $\Cp{b_1, \dots, b_n}$ the cylinder 
\bes \Cp{b_1, \dots, b_n} = \{ \alpha=\{a_n\}_{n\in \mathbb{N}_+} \in (0,1)\backslash{\mathbb{Q}} \, : \, a_k = b_k, 1\leq k\leq n \}.
 \ees
We will denote by $\Cnp$ the set of all cylinders  of length $n$, i.e, the set of all $\Cp{b_1, \dots, b_n}$ with  $b_k\in \mathbb{N}_+$ for $1\leq k \leq n$. Moreover, if $\Ci \in \Cnp$, we will denote by $\h{\Ci }$ the set $\pi^{-1}\Ci \subset D(\h{\G}) $.

More generally, given $b_k \in \mathbb{N}_+$, $k=0,\pm 1, \dots, \pm n$, let 
\bes \C{b_{-n},\dots, b_0;  b_1, \dots, b_n} = \{ \alpha=\{a_n\}_{n\in \mathbb{Z}} \in D(\h{\G}) \, : \, a_k = b_k, -n\leq k\leq n \}
 \ees 
and $\Cn$ the set of all bi-sided cylinders  of length $n$, i.e, the set, as  $b_k\in \mathbb{N}_+$ for $-n \leq k \leq n$, 
 of all $\C{b_{-n}, \dots,b_0; b_1, \dots,  b_n}$.
\begin{rem}\label{measureCn}
From the expression of the Gauss density and Remark \ref{mu2pimu1}, we get that $\mu_2(\hC{n})= \mu_1(\C{n})= \mu_1\left( \frac{1}{n+1}, \frac{1}{n} \right)=  O \left( \frac{1}{n^2}\right) $.
\end{rem}

\subsection{Special flows.}\label{suspensionflowsec}
Consider a probability space  $(D, \mathscr{B}, \mu_2)$ and an invertible map $F:D\rightarrow D$ which preserves $\mu_2$.
Let $\varphi: D\rightarrow \mathbb{R}_+$ be a strictly positive function such that $\int_{D} \varphi(\alpha) d\mu_2 < \infty$. 
The {phase space $D_{\Phi}$} of the special flow is the subset of $D\times \mathbb{R}$ given by
\bes
D_{\Phi} = \{ (x,y) |\quad  x \in D  \st 0\leq y < \varphi(x) \}
\ees
and can be depicted as the set of points below the graph of the roof function $\varphi$.
Consider the normalized measure $\mu_3$ which is the restriction to $D_{\Phi}$ of the product measure $\left(\int_{D} \varphi(\alpha) d \mu_2 \right)^{-1} \mu_2 \times \lambda$, where $\lambda$ denotes the Lebesgue measure on $\mathbb{R}$. 

The \emph{special flow $\{ \Phi_t \}_{t\in \mathbb{R}}$ built over $F$ with the help of the roof function $\varphi$} is a one-parameter group of $\mu_3$-measure preserving transformations of $D_\Phi$ whose action is generated by the following two relations:
\begin{equation} \label{suspflow}
 \left\{  \begin{array}{lll}
\Phi_t(x,y) &=& (x, y+t), \qquad \mathrm{if} \, 0\leq y+t < \varphi(x); \\
\Phi_{\varphi(x)}(x,0) &=& ( F(x),0).\\
		     \end{array} \right.
\end{equation}
Under the action of the flow a point of $(x,y) \in D_\Phi$  moves with unit velocity along the vertical line up to the point $(x,\varphi(x))$, then jumps instantly to the point $\left( F(x),0 \right)$, according to the base transformation. Afterward it continues its motion along the vertical line until the next jump and so on (see e.g. \cite{CFS:erg}).
Abusing the notation, we will often identify any set $C\subset D$ with $C\times \{ 0 \} \subset D\times \{0\}\subset D_{\Phi}$.

We will denote 
by
\bes \BS{\varphi,F}{0}( x) := 0; \qquad
\BS{\varphi,F}{r}( x) = \BS{\varphi}{r}( x)  := \sum_{i=0}^{r-1} \varphi(F^i(x)), \qquad x\in D, \, r\in \mathbb{N}^+, 
\ees
the $r^{th}$ non-renormalized \emph{Birkhoff sum} of $\varphi$ along the trajectory of $x$ under $F$. 

Let $t>0$. Given $x\in D$ denote by $r(x,t)$ the integer uniquely defined by
\be \label{defr}
r(x,t):= \min \{ r\in \mathbb{N} \, | \quad  \BS{\varphi}{r}(x) > t \}.
\ee
Then $r(x,t)-1$ gives the number of discrete iterations of $F$ which the point $(x,0)$ undergoes before time $t$.
According to this notation the flow $\Phi_t$ defined by (\ref{suspflow}) acts as
\be \label{flowdef}
\Phi_t(x,0) = \left( F^{r(x,t)-1}(x), t- \BS{\varphi}{r(x,t)-1}(x)\right).
\ee
For $t<0$, the action of the flow is defined as the inverse map.

\section{Reduction to a special flow.}\label{reductionsec}
Let us first define the special flow that we are going to consider.
\subsection{Roof function.}
Consider the following positive real-valued function on $ D(\h{\G})$:
\be \label{varphidef}
\varphi(\h{\alpha}) = \ln \left( a_1 + \frac{1}{a_0 + \frac{1}{a_{-1} + {\dots }}} \right) = -\ln (\h{\G}\h{\alpha})^- .
\ee
The reason for this definition will be clear after Lemma \ref{logqnandBS}.
It is easy to see, from  Remark \ref{measureCn}, that 
the function $\varphi $ is integrable with respect to $\mu_2$.

\begin{rem}\label{infsup}
Let $\Ci \in \Cn$, $n\geq 1$, be any cylinder. Then there exists $\delta = \delta(\Ci)>0$ and $M=M(\Ci)>0$ such that $\inf_{\h{\alpha}\in \h{\Ci}} \varphi (\h{\alpha})\geq \delta$ and $\sup_{\h{\alpha}\in \h{\Ci}} \varphi (\h{\alpha})\leq M$. It follows by remarking that $\varphi(\h{\alpha}) \geq \ln (1+\frac{1}{a_0+1})$ and  $\varphi(\h{\alpha}) \leq \ln (a_1 + 1)$.
\end{rem}
Let us consider the special flow $\{ \Phi_t \}_{t\in \mathbb{R}}$ built over $\h{\G}$ under the function $\varphi$ and let $\mu_3 = (\int \varphi \ud \mu_2)^{-1} \mu_2 \times \lambda$ be the  $\Phi_t$-invariant  probability  measure. 
Let us recall that $\{ \Phi_t \}_{t\in \mathbb{R}}$ is said to be mixing if, for all Borel subsets $A, B$ of $D_{\Phi}$, we have 
\bes
\lim_{t\rightarrow \infty} \mu_3 \left( \Phi_{-t} (A) \cap B \right) = \mu_3(A)\mu_3(B). 
\ees
\begin{prop}\label{mixing}
The flow $\{ \Phi_t \}_t $ is mixing.
\end{prop}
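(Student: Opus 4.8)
The plan is to reduce mixing of $\{\Phi_t\}$ to the absence of non-trivial $L^2$-eigenvalues of the flow — exploiting that the base $\h{\G}$ has very strong stochastic properties — and then to rule out such eigenvalues by an argument on periodic orbits of $\h{\G}$. First I would record that, under the symbolic coding $\h{\alpha}\leftrightarrow\{a_n\}_{n\in\mathbb{Z}}$, the map $\h{\G}$ is the two-sided shift on $\mathbb{N}_+^{\mathbb{Z}}$ and is, with respect to $\mu_2$, measurably isomorphic to a Bernoulli shift over the countable alphabet $\mathbb{N}_+$ (equivalently: the Gauss map $\G$ is exact, so its natural extension $\h{\G}$ is a $K$-automorphism); only the $K$-property will be used. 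I would then invoke the classical fact (in the spirit of \cite{CFS:erg}, as worked out in \cite{DS:sta}) that a special flow with an $L^1$ roof built over a $K$-automorphism is itself a $K$-flow — in particular mixing — as soon as it is weakly mixing, and that $\{\Phi_t\}$ is weakly mixing precisely when there is no $\lambda\neq0$ for which the cohomological equation $g\circ\h{\G}=e^{i\lambda\varphi}\,g$ admits a measurable solution $g\colon D(\h{\G})\to S^1$. Thus everything reduces to excluding such solutions for $\lambda\neq0$.

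To do so I would argue via periodic orbits. Since $\varphi(\h{\alpha})=\ln\bigl(a_1+[0;a_0,a_{-1},\dots]\bigr)$ has summable variation on the symbolic model (the dependence on the coordinate $a_{-k}$ decays exponentially in $k$), one may first replace $\varphi$ by a cohomologous function depending only on the future, i.e.\ pass to a potential over the one-sided Gauss map $\G$, which is a Gibbs--Markov map; a Liv\v{s}ic-type argument then shows that any measurable solution $g$ of $g\circ\h{\G}=e^{i\lambda\varphi}g$ agrees $\mu_2$-a.e.\ with a function of summable variation, so that the equation and its iterates $g\circ\h{\G}^{\,n}=e^{i\lambda\,\BS{\varphi}{n}(\h{\alpha})}\,g$ hold at every periodic point of $\h{\G}$. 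Let $\h{\alpha}$ be the point coded by the periodic word $\overline{(a_1,\dots,a_p)}$, and write $M_a=\left(\begin{smallmatrix}a&1\\1&0\end{smallmatrix}\right)$ and $\Lambda(w)$ for the spectral radius of $M_{a_1}\cdots M_{a_p}$. From $\h{\G}^{\,p}\h{\alpha}=\h{\alpha}$ we get $e^{i\lambda\,\BS{\varphi}{p}(\h{\alpha})}=1$, while the identity of Lemma~\ref{logqnandBS} (relating $\BS{\varphi}{n}$ to $\ln q_n$) together with $q_{kp}(\h{\alpha}^+)\asymp\Lambda(w)^{k}$ and $\BS{\varphi}{kp}(\h{\alpha})=k\,\BS{\varphi}{p}(\h{\alpha})$ forces $\BS{\varphi}{p}(\h{\alpha})=\ln\Lambda(w)$. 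Hence $\ln\Lambda(w)\in\tfrac{2\pi}{\lambda}\mathbb{Z}$ for every finite word $w$; taking $w=(1)$ and $w=(2)$ gives $\ln\tfrac{1+\sqrt{5}}{2}\in\tfrac{2\pi}{\lambda}\mathbb{Z}$ and $\ln(1+\sqrt{2})\in\tfrac{2\pi}{\lambda}\mathbb{Z}$, so their ratio is rational and $\bigl(\tfrac{1+\sqrt{5}}{2}\bigr)^{m}=(1+\sqrt{2})^{n}$ for some $m,n\in\mathbb{N}_+$; this is impossible, since the left-hand side lies in $\mathbb{Q}(\sqrt{5})$, the right-hand side in $\mathbb{Q}(\sqrt{2})$, and $\bigl(\tfrac{1+\sqrt{5}}{2}\bigr)^{m}\notin\mathbb{Q}$. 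Therefore $\lambda=0$, there are no non-trivial eigenvalues, and $\{\Phi_t\}$ is mixing.

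The main obstacle is the step from a measurable to a regular solution of the cohomological equation: the Liv\v{s}ic argument has to be run on a countable-alphabet Gibbs--Markov model with an \emph{unbounded} potential, and one must check that the distortion bounds of Remark~\ref{infsup} and the control in Lemma~\ref{logqnandBS} really suffice; the abstract reduction in the first paragraph ($K$ base plus weak mixing $\Rightarrow$ mixing) likewise needs to be justified for an $L^1$ roof rather than simply quoted for bounded roofs. Two alternatives circumvent this. One replaces the periodic-orbit computation by Schmidt's essential-value calculus for the $\mathbb{R}$-cocycle $n\mapsto\BS{\varphi}{n}$ over $\h{\G}$: decomposing a cylinder $\Cp{b_1,\dots,b_n}$ according to the $(n+1)$-st digit produces, by Remark~\ref{infsup}, increments $\BS{\varphi}{n+1}-\BS{\varphi}{n}$ on positive-measure subcylinders that lie near $\{\ln j:j\in\mathbb{N}_+\}$, whose differences are dense in $\mathbb{R}$; since the group of essential values is a closed subgroup of $\mathbb{R}$ it must then be all of $\mathbb{R}$, which is again exactly the ``no non-trivial eigenvalue'' statement. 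The other is to observe that $\{\Phi_t\}$ is measurably isomorphic to the geodesic flow on the unit tangent bundle of the modular surface $\mathrm{PSL}(2,\mathbb{Z})\backslash\mathbb{H}$ and to quote the classical mixing of that flow (Hopf, or Howe--Moore).
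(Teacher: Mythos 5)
The pivotal step of your main route --- ``a special flow with an $L^1$ roof built over a $K$-automorphism is a $K$-flow, in particular mixing, as soon as it is weakly mixing'' --- is not a quotable classical fact in the generality you need, and nothing later in your argument supplies it. Both of your in-house mechanisms for excluding eigenvalues (the measurable Liv\v{s}ic argument at periodic points, and Schmidt's essential-value calculus) terminate at exactly the statement ``no measurable solution of $g\circ\h{\G}=e^{i\lambda\varphi}g$ for $\lambda\neq0$'', i.e.\ at \emph{weak} mixing of $\{\Phi_t\}$; so the second alternative does not repair the gap, it reproduces it. Moreover the Liv\v{s}ic step itself (upgrading a merely measurable eigenfunction to one regular enough to be evaluated along periodic orbits of $\h{\G}$) requires a genuine theorem for a countable-alphabet system with an unbounded roof; Remark~\ref{infsup} and Lemma~\ref{logqnandBS} give the needed regularity of $\varphi$, but not the measurable-rigidity statement. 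The implication ``weak mixing $\Rightarrow$ mixing/$K$'' is indeed known for H\"older roofs over finite-alphabet mixing subshifts (Gibbs flows), but those proofs run through the hyperbolic product structure of the suspension --- which is precisely what must be exhibited here and is not free. As written, then, your main argument proves weak mixing of $\{\Phi_t\}$, not mixing. (The periodic-orbit arithmetic itself is fine: at the point coded by $\overline{(a_1,\dots,a_p)}$ one does get $\BS{\varphi}{p}=\ln\Lambda(w)$ from Lemma~\ref{logqnandBS}, and the incommensurability of $\ln\frac{1+\sqrt{5}}{2}$ and $\ln(1+\sqrt{2})$ is correct.)

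This is also where your route diverges most from the paper, which never passes through eigenvalues: using that $\varphi$ depends only on $(\h{\G}\h{\alpha})^-$, it constructs the local unstable leaves as horizontal segments and computes the local stable leaves explicitly, $y=y_0+\ln\frac{1+\h{\alpha}^-\h{\alpha}_0^+}{1+\h{\alpha}_0^-\h{\alpha}_0^+}$, and then verifies that the two foliations form a non-integrable pair; by the general theory this forces the Pinsker partition to be trivial, so $\{\Phi_t\}$ is a $K$-flow and hence mixing --- in other words, the paper builds by hand exactly the hyperbolic structure that your abstract ``$K$ base $+$ weak mixing'' reduction would have to lean on. Your third alternative (identifying $\Phi_t$ with a constant time-rescaling of the geodesic flow on the modular surface and quoting Hopf/Howe--Moore) is a legitimate complete proof and genuinely different from the paper's, but as stated it is a one-line appeal: you would need to show that $\varphi=-\ln(\h{\G}\h{\alpha})^-$ is cohomologous to $\ln(1/\h{\alpha}^+)$ --- for instance via the bounded transfer function $\ln(1+\h{\alpha}^-\h{\alpha}^+)$, the very quantity appearing in the paper's stable-leaf formula --- and then invoke the cross-section coding of the modular geodesic flow. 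With that filled in you would have a correct, though less self-contained, proof; without it, the proposal has a genuine gap at its central reduction.
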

\noindent Proposition \ref{mixing} was proved in \cite{DS:sta}. We recall the proof in \S \ref{mixingsection}.

\subsection{Denominators growth and Birkhoff sums.}
 Let us show that $\ln q_n$ can be approximated by Birkhoff sums of $\varphi$. 
Indeed, put
\be \label{deffn}
f_n(\h{\alpha}) = \ln q_n (\h{\alpha}) - \BS{\varphi}{n} (\h{\alpha}).
\ee
\begin{lemma}\label{logqnandBS}
There exist a function $f$ on $D(\h{\G})$ such that $f_n $ converges to $f$ uniformly in $\h{\alpha}$ and exponentially fast in $n$, i.e. 
\be \label{errorf}
\ln q_n (\h{\alpha}) = \BS{\varphi}{n} (\h{\alpha}) + f (\h{\alpha}) + \epsilon_n(\h{\alpha}), \qquad  \sup_{\h{\alpha}\in D(\h{\G})} \epsilon_n (\h{\alpha})  = O( 2^{- n}). 
\ee
\end{lemma}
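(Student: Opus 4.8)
The strategy is to rewrite both $\ln q_n(\h{\alpha})$ and the Birkhoff sum $\BS{\varphi}{n}(\h{\alpha})$ as sums of logarithms of finite ``reversed'' continued fractions, to observe that the $k$-th summands of the two sums differ only in their tail beyond the $k$-th partial quotient, and to control that difference by the classical contraction estimate for continued fractions. This is exactly the computation that motivates the definition (\ref{varphidef}) of $\varphi$.

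The first ingredient is the standard identity $q_n/q_{n-1}=[a_n;a_{n-1},\dots,a_1]$, proved by induction from $q_n=a_nq_{n-1}+q_{n-2}$, $q_0=1$, $q_{-1}=0$. Telescoping gives
\[
\ln q_n(\h{\alpha})=\sum_{k=1}^{n}\ln[a_k;a_{k-1},\dots,a_1].
\]
On the other hand, since $\h{\G}$ is the two-sided shift, $\h{\G}^{i}\h{\alpha}$ has coordinates $a_{n+i}$, so directly from (\ref{varphidef}) one has $\varphi(\h{\G}^{i}\h{\alpha})=\ln\bigl(a_{i+1}+(\h{\G}^{i}\h{\alpha})^{-}\bigr)=\ln[a_{i+1};a_i,a_{i-1},\dots]$, where now the tail runs all the way into the past through $a_i,a_{i-1},\dots,a_1,a_0,a_{-1},\dots$. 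Summing over $0\le i\le n-1$ and re-indexing by $k=i+1$,
\[
\BS{\varphi}{n}(\h{\alpha})=\sum_{k=1}^{n}\ln[a_k;a_{k-1},\dots,a_1,a_0,a_{-1},\dots].
\]
Subtracting and recalling (\ref{deffn}) yields $f_n(\h{\alpha})=\sum_{k=1}^{n}g_k(\h{\alpha})$, with $g_k(\h{\alpha}):=\ln[a_k;a_{k-1},\dots,a_1]-\ln[a_k;a_{k-1},\dots,a_1,a_0,a_{-1},\dots]$.

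It then remains to bound $g_k$ uniformly in $\h{\alpha}$. Inside $g_k$ the two continued fractions share the partial quotients $a_k,a_{k-1},\dots,a_1$ and differ only in the tail after $a_1$: empty in the first, equal to $[a_0;a_{-1},a_{-2},\dots]\in(1,\infty)$ in the second. Writing $[a_k;a_{k-1},\dots,a_1,t]=\frac{At+B}{Ct+D}$ as a Möbius function of the tail $t$, one has $|AD-BC|=1$, $D\ge0$, and $C$ a continuant of $a_{k-1},\dots,a_1$; since every $a_i\ge1$ forces $C$ to grow at least like Fibonacci numbers, $C\ge c\,\phi^{k}$ for an absolute $c>0$, $\phi$ the golden ratio. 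Hence the total variation of $t\mapsto[a_k;\dots,a_1,t]$ over $t\in[1,\infty]$ equals $\tfrac{1}{C(C+D)}\le c^{-2}\phi^{-2k}$, so $\bigl|[a_k;\dots,a_1]-[a_k;\dots,a_1,a_0,\dots]\bigr|\le c^{-2}\phi^{-2k}$; as both values are $\ge a_k\ge1$, the inequality $|\ln x-\ln y|\le|x-y|$ gives $\sup_{\h{\alpha}}|g_k(\h{\alpha})|\le c^{-2}\phi^{-2k}$.

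Since $\phi^{-2}<1/2$, the series $\sum_{k\ge1}g_k$ converges absolutely and uniformly on $D(\h{\G})$. Setting $f:=\sum_{k=1}^{\infty}g_k$ and $\epsilon_n:=f_n-f=-\sum_{k>n}g_k$ gives $\ln q_n=\BS{\varphi}{n}+f+\epsilon_n$ with
\[
\sup_{\h{\alpha}\in D(\h{\G})}|\epsilon_n(\h{\alpha})|\le c^{-2}\sum_{k>n}\phi^{-2k}=O(\phi^{-2n})=O(2^{-n}),
\]
which is (\ref{errorf}). The only delicate point is the \emph{uniformity} of the contraction estimate: the geometric rate must not degrade for any choice of partial quotients, which is why one invokes the Fibonacci lower bound $C\ge c\,\phi^{k}$ — available precisely because each $a_i\ge1$ — rather than any bound depending on the sizes of the $a_i$; the resulting rate $\phi^{-2n}$ sits comfortably inside the claimed $O(2^{-n})$. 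The algebraic identities of the second paragraph are routine, and are really the reason $\varphi$ was defined as in (\ref{varphidef}).
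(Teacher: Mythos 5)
Your proof is correct and follows essentially the same route as the paper: the same telescoping identities $\ln q_n=\sum_{k=1}^n\ln[a_k;a_{k-1},\dots,a_1]$ and $\BS{\varphi}{n}=\sum_{k=1}^n\ln[a_k;a_{k-1},\dots]$, the same termwise decomposition of $f_n$, and a uniform geometric bound on each difference summed into $f$. The only cosmetic difference is that you obtain the per-term estimate from the M\"obius/continuant variation with a Fibonacci lower bound, whereas the paper invokes the best-approximation inequality together with $q_m\ge 2^{(m-1)/2}$; these are equivalent classical facts and both yield the claimed $O(2^{-n})$ rate.
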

\begin{proof}
Let  $q_0=1$, $q_{-1}=0$ and $r_n = \frac{q_n}{q_{n-1}}$ for $n\in \mathbb{N}_+$,  so that $q_n = \Pi_{k=1}^{n} r_k$. From the well-known relation  $q_{k+1} = a_{k+1} q_k + q_{k-1}$ which holds for $k\geq 0$ (see e.g. \cite{Kh:con}), we get by recursion that, for $k\geq 1$, 
\bes
r_{k+1}= a_{k+1}+ \frac{1}{r_k} = [a_{k+1}; a_k , \dots, a_1] 
\ees
and hence 
\be \label{logqn}
\log q_n = \sum_{k=1}^n \ln[a_{k}; a_{k-1} , \dots, a_1]  . 
\ee

Since $(\h{\G}^k \h{\alpha})^- = [a_k, a_{k-1}, \dots ]$, from the definition (\ref{varphidef}) of $\varphi$ we get
\be \label{BS}
\BS{\varphi}{n}(\h{\alpha}) = \sum_{k=1}^n \ln \frac{1}{(\h{\G}^k \h{\alpha})^-} = \sum_{k=1}^{n}\ln [a_{k}; a_{k-1}, \dots ].
\ee 
Thus, from (\ref{deffn}) and (\ref{logqn}),
\bes
({f}_{k+1} - {f}_{k} )(\h{\alpha}) =  \ln r_{k+1} - \varphi(\h{\G}^{k} \h{\alpha}) =  \ln r_{k+1} - \ln \frac{1}{(\h{\G}^{k+1} \h{\alpha})^-} = \ln \frac{[a_{k+1}; a_k, \dots, a_1 ]}{[a_{k+1}; a_k,\dots ]}. 
\ees
In order to estimate the last term, consider $\beta_k = [ a_k, a_{k-1},\dots ]$ and and let $\left\{ \frac{p_{k,m}}{q_{k,m}}\right\}_m$ be the convergents of $\beta_k$, so that in particular $\frac{p_{k,k}}{q_{k,k}}=[ a_k, \dots, a_1 ] $. Recalling the well-known formula (see \cite{Kh:con})
\be\label{bestapprox}
\left| \beta_k-\frac{p_{k,m}}{q_{k,m}} \right| \leq \frac{1}{(q_{k,m})^2} 
\ee
and using that for all sequences of denominators of convergents $q_{k,m} \geq 2 ^{\frac{m-1}{2}}$ (\cite{Kh:con}, Thm. 12) and that $[a_{k+1}; a_k,\dots ] \geq 1$, we get
\be \label{exponential}
 \left|\ln 
\frac{[a_{k+1}; a_k, \dots, a_1 ]}{[a_{k+1}; a_k,\dots ]}
 \right| \leq \left|\ln \left( 1 + \frac{ \frac{p_{k,k}}{q_{k,k}}- \beta_k}{[a_{k+1}; a_k, \dots ]} \right)  \right|\leq 2 \left| \beta_k-\frac{p_{k,k}}{q_{k,k}} \right| \leq 2^{2-k}.
\ee
Hence, (\ref{exponential}) shows that we can well define
\be \label{fdef}
 {f} (\h{\alpha}) = 
\sum_{k=0}^{\infty} ({f}_{k+1} - {f}_{k} )(\h{\alpha})
\ee
and (\ref{errorf}) clearly follows from the geometric bound of the series, with $\epsilon_n = 2^{3-n}$.
\end{proof}
\begin{lemma}\label{variationflemma}
If ${\alpha}_1,{\alpha}_2 \in \C{a_{-n},\dots, a_0; a_1, \dots , a_n}$, then 
\be  \label{variationf}
| f (\h{\alpha}_1) -   f (\h{\alpha}_2) |\leq C 2^{- n}, 
\ee
where $C>0$ is an absolute constant.
\end{lemma}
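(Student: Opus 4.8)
The plan is to exploit the series representation of $f$ obtained in Lemma~\ref{logqnandBS}, namely $f(\h{\alpha}) = \sum_{k=0}^{\infty} (f_{k+1}-f_k)(\h{\alpha})$, together with the explicit bound
\bes
(f_{k+1}-f_k)(\h{\alpha}) = \ln \frac{[a_{k+1}; a_k, \dots, a_1]}{[a_{k+1}; a_k, \dots]},
\ees
and to split the sum at the index $n$. For $k \geq n$ the individual terms are already exponentially small by \eqref{exponential}, so their contribution to $|f(\h{\alpha}_1) - f(\h{\alpha}_2)|$ is $O(2^{-n})$ regardless of whether $\h{\alpha}_1, \h{\alpha}_2$ agree on any coordinates; this handles the ``tail'' with room to spare. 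The work is therefore entirely in the ``head'' sum $\sum_{k=0}^{n-1}(f_{k+1}-f_k)$, where I must show that agreement of $\h{\alpha}_1$ and $\h{\alpha}_2$ on coordinates $a_{-n}, \dots, a_n$ forces the two head sums to differ by at most $O(2^{-n})$.

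For a fixed $k$ with $0 \le k \le n-1$, the term $(f_{k+1}-f_k)(\h{\alpha}) = \ln[a_{k+1}; a_k, \dots, a_1] - \ln[a_{k+1}; a_k, a_{k-1}, \dots]$ depends, through its first (finite continued fraction) part, only on the coordinates $a_1, \dots, a_{k+1}$, which both points share since $k+1 \le n$; hence the first part is \emph{identical} for $\h{\alpha}_1$ and $\h{\alpha}_2$. The only discrepancy comes from the second part, $\ln[a_{k+1}; a_k, a_{k-1}, \dots]$, an infinite continued fraction whose tail beyond depth $k+1$ can differ between the two points. But here I can use the standard fact that two infinite continued fractions agreeing in their first $m$ partial quotients are within $2^{-(m-1)}$ (or a comparable explicit geometric rate) of each other — precisely the estimate \eqref{bestapprox} combined with $q_{k,m} \ge 2^{(m-1)/2}$ already invoked in the previous proof. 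Since $\h{\alpha}_1,\h{\alpha}_2$ agree on $a_1,\dots,a_{k+1}$, the values $[a_{k+1}; a_k,\dots]$ for the two points agree to within $\sim 2^{-(k+1)}$; applying the Lipschitz bound $|\ln(1+t)| \le 2|t|$ (as in \eqref{exponential}, using $[a_{k+1};a_k,\dots]\ge 1$) gives $|(f_{k+1}-f_k)(\h{\alpha}_1) - (f_{k+1}-f_k)(\h{\alpha}_2)| \le C' 2^{-(k+1)}$.

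Summing this over $0 \le k \le n-1$ gives a geometric series bounded by a constant times $2^{-1}$ — which is $O(1)$, not $O(2^{-n})$, so a naive term-by-term estimate of the head is too weak. The fix, and what I expect to be the main obstacle, is to observe that for small $k$ the relevant tails are genuinely \emph{short}: the discrepancy in $[a_{k+1}; a_k, a_{k-1}, \dots]$ is controlled not by its agreement depth from the front but by how far back the shared block extends, i.e. the shared coordinates run down to $a_{-n}$, so the continued fraction $[a_{k+1}; a_k, \dots, a_1, a_0, a_{-1}, \dots]$ is pinned for the first $k+1+n$ entries. Thus the correct bound on the $k$-th discrepancy is $C' 2^{-(n+k+1)}$ (agreement to depth $n+k+1$), and now $\sum_{k=0}^{n-1} C' 2^{-(n+k+1)} \le C 2^{-n}$ as required. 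I would also double-check the tail sum $\sum_{k\ge n} |(f_{k+1}-f_k)(\h{\alpha}_i)| \le \sum_{k\ge n} 2^{2-k} = 2^{3-n}$ feeds into the same $O(2^{-n})$ bound, and collect the constants into the single absolute constant $C$ claimed in \eqref{variationf}.
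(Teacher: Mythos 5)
Your proposal is correct and follows essentially the same route as the paper: splitting the telescoping series at index $n$ is exactly the paper's decomposition $f = f_n + (f - f_n)$, with the tail handled by the uniform bound $|f-f_n| = O(2^{-n})$ from Lemma \ref{logqnandBS} and the head reduced (after cancellation of the finite continued-fraction parts, i.e.\ of $\ln q_n$) to the Birkhoff-sum difference, each of whose terms is bounded by a constant times $2^{-(k+n)}$ using agreement of the coordinates down to $a_{-n}$. The constants and indexing differ trivially, but the key estimates are identical to the paper's.
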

\begin{proof}
First, by Lemma \ref{logqnandBS}, we have 
\be \label{difference} 
|{f}(\h{\alpha}_1)- {f}(\h{\alpha}_2 )|\leq   C 2^{-n} +  |{f_n}(\h{\alpha}_1)- {f_n}(\h{\alpha}_2 )|. 
\ee
Let us estimate the second term of the right hand side.
Let $\h{ \alpha}_1  = \{a'_m\}_{m\in \mathbb{Z}}$, $\h{\alpha}_2 = \{a''_m\}_{m\in \mathbb{Z}}$, where by assumption $a'_m=a''_m=a_m$ for $-n\leq m\leq n$.  
 From (\ref{BS}),
\be\label{BSdifference}
  \BS{\varphi}{n}(\h{\alpha}_1)- \BS{\varphi}{n}(\h{\alpha}_2 ) = \sum_{k=1}^{n} \ln \frac{ [a_{k}; a_{k-1}, \dots, a_{-n}, a'_{-n-1}, a'_{-n-2} \dots  ]}{ [a_{k}; a_{k-1}, \dots, a_{-n}, a''_{-n-1}, a''_{-n-2} ,\dots ]}.
\ee 
Arguing as in Lemma \ref{logqnandBS}, for $k=1, \dots, n$, consider 
\bes
\beta'_k= [a_{k-1},a_{k-2}, \dots, a_{-n}, a'_{-n-1}, \dots] \quad \mathrm{and}\quad  \beta''_k= [a_{k-1},a_{k-2}, \dots, a_{-n}, a''_{-n-1}, \dots] 
\ees
 and denote  by $\left\{\frac{ p^{'}_{k,n}}{q^{'}_{k,n}}\right\}_n$ and $\left\{ \frac{p^{''}_{k,n}}{q^{''}_{k,n}}\right\}_n$ their respective convergents. Since $\frac{p^{'}_{k,k+n}}{q^{'}_{k,k+n}} =\frac{p^{''}_{k,k+n}}{q^{''}_{k,k+n}}$,  from (\ref{bestapprox}) and $q_{m} \geq 2 ^{\frac{m-1}{2}}$ we get
\bes
\left|\beta'_k - \beta''_k \right| \leq \left|\beta'_k - \frac{p^{'}_{k,k+n}}{q^{'}_{k,k+n}}\right| +  \left|\beta''_k - \frac{p^{''}_{k,k+n}}{q^{''}_{k,k+n}}\right| \leq \frac{1}{(q^{'}_{k,k+n})^2} + \frac{1}{{(q^{''}_{k,k+n})} ^2} \leq 2^{2-k-n}.
\ees
Hence, as in (\ref{exponential}),
\bes
 \left| \sum_{k=1}^{n} \ln \frac{ [a_{k}; \dots, a_{-n}, a'_{-n-1}, \dots  ]}{ [a_{k}; \dots, a_{-n}, a''_{-n-1}, \dots ]}  \right| \leq   \sum_{k=1}^{n}
 2 \left| \beta'_k-\beta''_k \right| \leq  \sum_{k=1}^{n}  2^{3-k-n} \leq C 2^{-n}.
\ees
Hence,  by (\ref{deffn}), remarking that we also have $q_n(\h{ \alpha}_1^+)= q_n(\h{ \alpha}_2^+)$, this implies that $ |{f_n}(\h{\alpha}_1)- {f_n}(\h{\alpha}_2 )| \leq C 2^{-n}$ and gives the desired estimate of (\ref{difference}).
\end{proof}


\subsection{Comparing renewal times.}
Given $\h{\alpha}$ and $R>0$, we want to choose  $T$ as a function of $R$ so that we can compare $n_R(\h{\alpha})$ and $r(\h{\alpha}, T)$, where $r(\h{\alpha}, T)-1$ is the discrete number of iterations undergone by $\Phi_t (\h{\alpha},0)$ for $t\leq T$, see (\ref{defr}).
Let us recall that $n_R(\h{\alpha})$ is uniquely determined by
\bes
\ln q_{n_R(\h{\alpha})-1} \leq  \ln R < \ln q_{n_R(\h{\alpha})}. 
\ees
By  (\ref{deffn}), the previous inequality can be rewritten as
\be \label{n_Rinequality}
 \BS{\varphi}{n_R(\h{\alpha})-1} (\h{\alpha})+ f_{n_R(\h{\alpha})-1}(\h{\alpha}) \leq \ln R <  \BS{\varphi}{n_R(\h{\alpha})} (\h{\alpha}) + f_{n_R(\h{\alpha})}(\h{\alpha}).
\ee
To avoid the dependence of the time on $\alpha$, let us localize to a set of $\Ci \subset D(\h{\G})$ and denote $f_{\Ci} = \sup_{\h{\alpha} \in \Ci } f(\h{\alpha})$. Assume that for all $\h{\alpha}\in \Ci$ we have $| f (\h{\alpha}) -  f_{\Ci}|\leq \epsilon/2$ (such sets will be constructed in the Proof of Theorem \ref{main}).


Let us first show that on large measure sets the growth of $n_R$ is guaranteed by the growth of $R$.
\begin{lemma}\label{badpoints}
For each measurable $\Ci\subset D(\h{\G})$ and $\epsilon>0$ there exits a measurable $ \Ci '\subset \Ci$ 
such that $\mu_2(\Ci\backslash \Ci')\leq \epsilon \mu_2(\Ci)$ and $\min_{\h{\alpha}\in \Ci'}  n_R(\h{\alpha} ) $ tends to infinity uniformly as $R$ tends to infinity.

Similarly, given  $\epsilon>0$ there exits a measurable $ \Ci_{\epsilon} \subset (0,1)$ 
such that $\mu_1((0,1)\backslash \Ci_{\epsilon})\leq \epsilon $ and $\min_{{\alpha}\in \Ci_{\epsilon}}  n_R({\alpha} ) $ tends to infinity uniformly as $R$ tends to infinity.
\end{lemma}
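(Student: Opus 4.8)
The plan is to exploit the fact that $n_R(\h\alpha)$ can only fail to grow if $\alpha$ has arbitrarily large continued fraction entries appearing early on — more precisely, if some $q_n(\h\alpha^+)$ is itself very large. Recall $n_R(\h\alpha)$ is the first index $n$ with $q_n(\h\alpha^+)>R$. For fixed $m$, the set of $\alpha$ with $n_R(\h\alpha)\le m$ is exactly $\{\h\alpha : q_m(\h\alpha^+)>R\}$ (since the $q_n$ are increasing in $n$), and by Remark \ref{n_Rfuture} these are all unions of fibers $\pi^{-1}\alpha$. So the statement "$\min_{\h\alpha\in\Ci'} n_R(\h\alpha)\to\infty$ uniformly" is equivalent to: for every $m$ there is $R_m$ such that $q_m(\h\alpha^+)\le R_m$ for all $\h\alpha\in\Ci'$, i.e. $\Ci'$ avoids the "bad" sets $B_{m,R}:=\{\h\alpha : q_m(\h\alpha^+)>R\}$ for $R$ large. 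So I need to choose $\Ci'$ meeting only small-measure tails of the increasing-in-$R$ families $\{B_{m,R}\}_R$ for all $m$ simultaneously.

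First I would observe that, for each fixed $m$, since $q_m$ is a finite measurable (indeed, a.e.\ finite) function on $D(\h\G)$, we have $\mu_2(B_{m,R})=\mu_2\{q_m>R\}\to 0$ as $R\to\infty$; this uses only that $q_m$ is finite $\mu_2$-a.e., which holds because $q_m(\h\alpha^+)$ depends only on the first $m$ entries $a_1,\dots,a_m$ and is finite for every such choice. Hence I may pick an increasing sequence $R_1<R_2<\cdots$ with $\mu_2(B_{m,R_m})\le \epsilon\,\mu_2(\Ci)\,2^{-m}$. Then set
\bes
\Ci' = \Ci \setminus \bigcup_{m\ge 1} \left( B_{m,R_m}\cap \Ci \right).
\ees
By subadditivity $\mu_2(\Ci\setminus\Ci')\le \sum_{m\ge1}\mu_2(B_{m,R_m})\le \epsilon\,\mu_2(\Ci)$, as required. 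For the uniform growth: fix $m$; for $R\ge R_m$, every $\h\alpha\in\Ci'$ lies outside $B_{m,R_m}\supseteq$ ... wait — I need $B_{m,R}\subseteq B_{m,R_m}$, which holds since $R\ge R_m$ makes $\{q_m>R\}\subseteq\{q_m>R_m\}$. Hence for $R\ge R_m$ no $\h\alpha\in\Ci'$ satisfies $q_m(\h\alpha^+)>R$, i.e. $n_R(\h\alpha)>m$ for all $\h\alpha\in\Ci'$. Since $m$ was arbitrary, $\min_{\h\alpha\in\Ci'}n_R(\h\alpha)\to\infty$ uniformly in $\h\alpha\in\Ci'$ as $R\to\infty$.

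For the second assertion, the argument is identical with $\mu_1$ in place of $\mu_2$ and $\Ci=(0,1)$: set $B'_{m,R}=\{\alpha\in(0,1):q_m(\alpha)>R\}$, note $\mu_1(B'_{m,R})\to0$ as $R\to\infty$ (equivalently $B_{m,R}=\pi^{-1}B'_{m,R}$ and apply Remark \ref{mu2pimu1}), choose $R_m$ with $\mu_1(B'_{m,R_m})\le\epsilon 2^{-m}$, and let $\Ci_\epsilon=(0,1)\setminus\bigcup_m B'_{m,R_m}$; then $\mu_1((0,1)\setminus\Ci_\epsilon)\le\epsilon$ and $\min_{\alpha\in\Ci_\epsilon}n_R(\alpha)\to\infty$ uniformly. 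I expect no serious obstacle here: the only point requiring any care is verifying the monotonicity $\{q_m>R\}\subseteq\{q_m>R_m\}$ for $R\ge R_m$ (trivial) together with the fact, used implicitly, that $q_m$ is finite a.e.\ so that $\mu_2\{q_m>R\}\to0$ — this last being the substantive input, and it is immediate from $q_m$ depending only on finitely many entries. The construction via a $2^{-m}$-weighted Borel–Cantelli-style removal is the natural way to handle all $m$ at once.
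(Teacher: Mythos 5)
Your proof is correct, and it takes a genuinely different and more elementary route than the paper. The paper invokes the L\'evy--Khinchin theorem ($\frac{\ln q_n}{n}\to l$ for $\mu_1$-a.e.\ $\alpha$, hence $\mu_2$-a.e.\ $\h{\alpha}$ via Remarks \ref{mu2pimu1} and \ref{n_Rfuture}) together with Egorov's theorem to extract $\Ci'\subset\Ci$ on which $\ln q_n\leq c\,n$ for all $n$, which gives the quantitative bound $n_R\geq c^{-1}\ln R$ on $\Ci'$. You instead use only the identity $\{\,n_R\leq m\,\}=\{\,q_m>R\,\}$ (valid since $q_n$ is nondecreasing in $n$), the fact that each $q_m$ is a finite measurable function so that $\mu_2\{q_m>R\}\to0$ as $R\to\infty$, and a $2^{-m}$-weighted removal of the tails $B_{m,R_m}$; this avoids both the ergodic-theoretic input and Egorov entirely. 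The trade-off is that your construction yields only the qualitative statement (for each $m$ there is $R_m$ with $n_R>m$ on $\Ci'$ whenever $R\geq R_m$), with no rate, whereas the paper's argument gives the logarithmic lower bound $n_R\geq c^{-1}\ln R$ as a byproduct; but the lemma as stated, and every later use of it (in Lemma \ref{comparing} and in the proof of Theorem \ref{main}), requires only the qualitative uniform divergence, so your argument fully suffices. (Both your proof and the paper's implicitly treat the case $\mu_2(\Ci)>0$, which is the only case in which the lemma is applied.)
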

\begin{proof}
By L{\'e}vy-Khinchin Theorem, for $\mu_1$-a.e. $\alpha \in (0,1)$, there exists an absolute constant $l>0$ such that $\lim_{n\rightarrow \infty} \frac{\ln q_n}{n} =  l$. 
 By Remarks \ref{mu2pimu1} and \ref{n_Rfuture}, the same holds for  $\mu_2$-a.e. $\h{\alpha }\in D(\h{\G})$. By Egorov's theorem, we can find for each $\epsilon>0$ a measurable subset $\Ci_1\subset \Ci$ with $\mu_2(\Ci\backslash \Ci_1 )\leq \frac{\epsilon}{2} \mu_2(\Ci)$ on which the convergence is uniform, so that for some $\overline{n}$, $\frac{\ln q_n}{n} \leq 2 l$ for each $n\geq \overline{n}$. Moreover, there exists $\Ci_2\subset \Ci$ with  $\mu_2(\Ci\backslash \Ci_2 )\leq \frac{\epsilon}{2} \mu_2(\Ci)$ such that on $\Ci_2$, the functions $\frac{\ln q_n}{n} $ for $n=0,\dots, \overline{n}$ are uniformly bounded.  Hence, setting $\Ci'= \Ci_1\cap \Ci_2$, $\mu_2(\Ci\backslash \Ci' )\leq \epsilon \mu_2(\Ci)$ and there exists a constant $c=c(\Ci , \epsilon)>0$ such that for all $\h{\alpha}\in \Ci' $ and all $n\in \mathbb{N}_+$ we have $\ln q_{n} \leq c n $. Since by definition $q_{n_R(\h{\alpha})}>R$, this implies that $\min_{\h{\alpha}\in \Ci'}  n_R(\h{\alpha}) \geq (c)^{-1} \ln q_{n_R(\h{\alpha})}  \geq (c)^{-1} \ln R $, from which the Lemma follows. 
The proof of the second part proceeds in exactly the same way.
\end{proof}

\begin{lemma} \label{comparing}
Assume that $\Ci\in \mathscr{C}_n$ and for all $\h{\alpha}\in \Ci$ we have $| f (\h{\alpha}) -  f_{\Ci}|\leq \epsilon/2$. There exists $R_0 =R_0(\Ci) >0$ such that, whenever $R\geq R_0$, if we set $T=T(R, \Ci)=  \ln R - f_{\Ci}$ and consider
\bes
U = U_{\Ci}: =\{\h{\alpha}  \in \Ci 
\st n_R(\h{\alpha}) \neq r(\h{\alpha}, T) \},
\ees
 we have $\mu_2(U \cap \Ci)\leq 7 \epsilon \mu_2(\Ci)$. 
\end{lemma}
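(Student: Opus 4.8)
The plan is to rewrite the inequality defining $n_R(\h{\alpha})$ and the inequality defining $r(\h{\alpha},T)$ in terms of the \emph{same} Birkhoff sums $\BS{\varphi}{k}(\h{\alpha})$, and to observe that, with the choice $T=\ln R-f_{\Ci}$, these can select different indices only when the point $\Phi_T(\h{\alpha},0)$ of the suspension sits very close to the top or to the bottom of its fiber. By Lemma~\ref{logqnandBS} the condition $\ln q_{n_R-1}\le\ln R<\ln q_{n_R}$ becomes (\ref{n_Rinequality}), i.e.
\bes
\BS{\varphi}{n_R-1}(\h{\alpha})+f(\h{\alpha})+\epsilon_{n_R-1}(\h{\alpha})\le T+f_{\Ci}<\BS{\varphi}{n_R}(\h{\alpha})+f(\h{\alpha})+\epsilon_{n_R}(\h{\alpha}),
\ees
whereas $r=r(\h{\alpha},T)$ is characterised by $\BS{\varphi}{r-1}(\h{\alpha})\le T<\BS{\varphi}{r}(\h{\alpha})$. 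On $\Ci$ we have $0\le f_{\Ci}-f(\h{\alpha})\le\epsilon/2$ and $|\epsilon_k|=O(2^{-k})$ uniformly; hence, once $R$ is large enough that all the exponentially small error terms that occur are $<\epsilon/2$, a short manipulation of these inequalities shows that $n_R(\h{\alpha})\ne r(\h{\alpha},T)$ forces the \emph{undershoot} $T-\BS{\varphi}{r-1}(\h{\alpha})$ or the \emph{overshoot} $\BS{\varphi}{r}(\h{\alpha})-T$ to be $<\epsilon$ (here $r=r(\h{\alpha},T)$).

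To make ``$R$ large enough'' uniform in $\h{\alpha}$ one needs the renewal index to be large, and for this I would use Lemma~\ref{badpoints}: choose $\Ci'\subset\Ci$ with $\mu_2(\Ci\setminus\Ci')\le\epsilon\,\mu_2(\Ci)$ on which $\min_{\Ci'} n_R\to\infty$ as $R\to\infty$; by the argument in the proof of that lemma (the linear bound $\ln q_n\le c(\Ci)\,n$ on $\Ci'$, combined with $T<\BS{\varphi}{r}(\h{\alpha})=\ln q_r(\h{\alpha})-f(\h{\alpha})-\epsilon_r(\h{\alpha})$) the same holds for $\min_{\Ci'}r(\h{\alpha},T)$, so there is $R_0=R_0(\Ci)$ such that the previous paragraph applies on $\Ci'$ whenever $R\ge R_0$. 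Now, since $\Phi_T(\h{\alpha},0)=\bigl(\h{\G}^{r-1}\h{\alpha},\,T-\BS{\varphi}{r-1}(\h{\alpha})\bigr)$ by (\ref{flowdef}) and $\varphi(\h{\G}^{r-1}\h{\alpha})=\BS{\varphi}{r}(\h{\alpha})-\BS{\varphi}{r-1}(\h{\alpha})$, the condition ``undershoot $<\epsilon$ or overshoot $<\epsilon$'' says precisely that $\Phi_T(\h{\alpha},0)$ belongs to the set $A\subset D_\Phi$ which is the union of the bottom slab $\{y<\epsilon\}$ and the top slab $\{y>\varphi(x)-\epsilon\}$. Consequently $U\subset(\Ci\setminus\Ci')\cup W$, where $W:=\{\h{\alpha}\in\Ci:\Phi_T(\h{\alpha},0)\in A\}$.

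The remaining, and genuinely delicate, point is to estimate $\mu_2(W)$: it asks how much of the ``renewal cross-section'' $\Phi_T(\Ci\times\{0\})$ accumulates near the top or bottom of the tower, an overshoot/undershoot estimate of true renewal-theoretic character for which I do not see a soft argument — this is where the mixing of the special flow has to be used. I would argue by thickening $\Ci$ in the flow direction. By Remark~\ref{infsup}, $\varphi\ge\delta(\Ci)>0$ on $\Ci$, so for $0\le s\le\zeta:=\min(\epsilon,\delta(\Ci)/2)$ one has $\Phi_T(\h{\alpha},s)=\Phi_{T+s}(\h{\alpha},0)$; moreover a direct inspection of the flow shows that if $\Phi_T(\h{\alpha},0)\in A$ then $\Phi_{T+s}(\h{\alpha},0)$ remains, for all such $s$, inside the slightly larger set $A'=\{y<\epsilon+\zeta\}\cup\{y>\varphi(x)-\epsilon\}$, whose $\mu_3$-measure is at most $3\epsilon/\!\int\varphi\,\ud\mu_2$. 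Hence $\Phi_T(W\times[0,\zeta])\subset A'$, so by the $\Phi_t$-invariance of $\mu_3$ we get $\mu_3(W\times[0,\zeta])\le\mu_3\bigl(\Phi_{-T}A'\cap(\Ci\times[0,\zeta])\bigr)$, and the right-hand side tends to $\mu_3(A')\,\mu_3(\Ci\times[0,\zeta])$ as $R\to\infty$ by Proposition~\ref{mixing}. Using $\mu_3(W\times[0,\zeta])=\zeta\,\mu_2(W)/\!\int\varphi\,\ud\mu_2$ and the analogous identity for $\Ci$, this yields, for $R$ sufficiently large, $\mu_2(W)\le 2\,\mu_3(A')\,\mu_2(\Ci)\le 6\epsilon\,\mu_2(\Ci)/\!\int\varphi\,\ud\mu_2$; adding the $\epsilon\,\mu_2(\Ci)$ coming from $\Ci\setminus\Ci'$ and using that $\int\varphi\,\ud\mu_2$ equals the Lévy–Khinchin constant $\pi^2/(12\ln 2)>1$, we obtain $\mu_2(U)\le 7\epsilon\,\mu_2(\Ci)$ (the case $\epsilon\ge 1/7$ being trivial). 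The step I expect to be the main obstacle is exactly this last measure estimate: everything hinges on the fact that $\Phi_T(\Ci\times\{0\})$ does not concentrate near the boundary of the tower, and it is here that mixing — rather than any elementary renewal bound — is really required.
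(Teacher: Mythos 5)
Your argument is correct and is essentially the paper's own proof: you reduce $n_R\neq r(\cdot,T)$ on a large-measure subset $\Ci'$ (via Lemma \ref{badpoints} and the uniform bounds $|f-f_{\Ci}|\le\epsilon/2$, $|\epsilon_k|<\epsilon/2$) to the event that $\Phi_T(\h{\alpha},0)$ lies in the top or bottom slab of the tower, then thicken the cross-section by $\zeta\le\min(\epsilon,\delta(\Ci)/2)$ in the flow direction and apply mixing (Proposition \ref{mixing}) to the slightly enlarged slabs, exactly as in the paper, with the same final accounting $6\epsilon+\epsilon=7\epsilon$. The only caveats are cosmetic: the ``short manipulation'' should be written out with care about strict versus non-strict inequalities (the clean statement is that the undershoot or the overshoot is $\le\epsilon$, which changes nothing in the slab-measure bounds), and your extra remark that $r(\h{\alpha},T)\to\infty$ is not actually needed.
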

\noindent Hence, outside of a subset of $\Ci$ of arbitrarily small proportion and for large $R$, the function $T=T(R,\Ci)$ is such that $ n_R(\cdot) = r(\cdot, T)$.
\begin{proof}
 Let $T=T(R, \Ci)=  \ln R - f_{\Ci}$.
By definition, we have 
\be \label{r(T)inequality}
 \BS{\varphi}{r(\h{\alpha}, T)-1} (\h{\alpha})\leq  T = \ln R - f_{\Ci}  <  \BS{\varphi}{r(\h{\alpha}, T)} (\h{\alpha}).
\ee
Let $\Ci'\subset \Ci$ be given by Lemma \ref{badpoints}.  
Let us show, by comparing (\ref{r(T)inequality}) to (\ref{n_Rinequality}), that,  as long as $R\geq R_0$ for some $R_0$ defined below, we have  
\be\label{inclusion}
U \cap \Ci ' \subset U_{\epsilon} \cup U_{-\epsilon} \cap \Ci ' 
\ee
 where  $U_{\pm \epsilon}\subset D(\h{\G})$ are defined as   
\be \label{Uepsilondef}
 U_{-\epsilon} = \{ \h{\alpha} 
  \st T < \BS{\varphi}{r(\h{\alpha}, T)}(\h{\alpha}) \leq T + \epsilon \}; \quad  U_{\epsilon} = \{ \h{\alpha} 
 \st T - \epsilon < \BS{\varphi}{r(\h{\alpha}, T)-1}(\h{\alpha}) \leq T \}. 
\ee
To show the inclusion (\ref{inclusion}), assume that $ \h{\alpha}\in \Ci '$, but $ \h{\alpha}\notin U_{\epsilon} \cup U_{-\epsilon}$. Then, by (\ref{Uepsilondef}) and (\ref{r(T)inequality}), we have 
\be \label{strictr(T)inequalities}
 \BS{\varphi}{r(\h{\alpha}, T)-1} (\h{\alpha})\leq T-\epsilon; \qquad \BS{\varphi}{r(\h{\alpha}, T)}(\h{\alpha}) > T + \epsilon .
\ee
Choose $n_0\gg 1$ so that by Lemma \ref{logqnandBS}, for all $n\geq n_0-1$, $|f_n - f|\leq \epsilon/2$. By {Lemma \ref{badpoints}} we can choose $R_0$ so that, if $R\geq R_0$, then $n_R(\h{\alpha})\geq n_0$ for all $\h{\alpha}\in \Ci'$.
 This, together with the  assumptions on $\Ci$, implies that, since $\h{\alpha}\in \Ci'$,   $|f_{i}(\h{\alpha}) -  f_{\Ci}|<\epsilon$ for $i=n_R(\h{\alpha}), i=n_R(\h{\alpha})-1$. Hence, (\ref{n_Rinequality}) gives
\bes
 \BS{\varphi}{n_R(\h{\alpha})}  > \ln R -  f_{n_R(\h{\alpha})}(\h{\alpha}) > T -\epsilon; \quad
 \BS{\varphi}{n_R(\h{\alpha})-1} (\h{\alpha}) \leq \ln R - f_{n_R(\h{\alpha})-1}(\h{\alpha})  < T+ \epsilon,
\ees
which, compared with (\ref{strictr(T)inequalities}), since  $\BS{\varphi}{n}(\h{\alpha})$ is increasing in $n$, implies that 
\bes
r(\h{\alpha}, T)-1< n_R(\h{\alpha}) \quad \mathrm{and} \quad n_R(\h{\alpha})- 1 < r(\h{\alpha}, T) .
\ees
Since both $n_R(\h{\alpha})$ and $r(\h{\alpha}, T)$ are integers, these inequalities imply $n_R(\h{\alpha})=r(\h{\alpha}, T)$, i.e, $\h{\alpha}\notin U$, proving (\ref{inclusion}) as desired.

Recalling the definition (\ref{flowdef}) of the special flow action, we can rewrite the sets $U_{\pm\epsilon}$ as
\begin{eqnarray}\nonumber
 U_{\epsilon} &= &\{ \h{\alpha}  
 \st 0 \leq T- \BS{\varphi}{r(\h{\alpha}, T)-1} < \epsilon \} = \{ (\h{\alpha}, 0)  
 \st \Phi_{T}\left(\h{\alpha}, 0\right) \in D_{\Phi}^{\epsilon} \} ,\\
 U_{-\epsilon} &= &\{ \h{\alpha}  
\st  \varphi(\h{\G}^{r(\h{\alpha}, T)-1}\h{\alpha})-\epsilon \leq  T- \BS{\varphi}{r(\h{\alpha}, T)-1} <   \varphi(\h{\G}^{r(\h{\alpha}, T)-1}\h{\alpha}) \} = \nonumber \\ &=&\{ (\h{\alpha}, 0)  
\st \Phi_{T}\left(\h{\alpha}, 0\right)  \in  D_{\Phi}^{-\epsilon}  \},
\nonumber
\end{eqnarray}
where $D_{\Phi}^{\epsilon} = D(\h{\G})\times [0, \epsilon)$ and $D_{\Phi}^{-\epsilon} = \{ (\h{\alpha}, y) \st \varphi(\h{\alpha})-\epsilon \leq y <  \varphi(\h{\alpha})\}$.

We want to use mixing of $\{\Phi_t\}_{t\in \mathbb{R}}$ to estimate the measures of the last two sets. In order to do this, we need to ``thicken'' them as follows. Choose  $0< \delta \leq \epsilon$ such that, by Remark \ref{infsup}, $\delta < \min_{\h{\alpha}\in \Ci} \varphi(\h{\alpha})$ 
and  consider the following two subsets of $D_{\Phi}$:  
\bes
 U_{\pm\epsilon}^\delta = \{ (\h{\alpha}, z) \st 0\leq z < \delta;\,  \Phi_{T}\left(\h{\alpha}, z\right) \in D_{\Phi}^{\pm \epsilon } \}=  D_{\Phi}^{\delta }  \cap  \Phi_{-T}  D_{\Phi}^{\pm \epsilon } . 
\ees
Let us show that if  $\h{\alpha}\in U_{\epsilon}\cap \Ci$, then for each $0\leq z < \delta$, we have $(\h{\alpha}, z) \in  U_{\epsilon+\delta}^\delta $. Indeed, by choice of $\delta$, 
\be\label{cases} \begin{split}
& \Phi_T(\h{\alpha},z) = \Phi_{T+z}(\h{\alpha},0) = \Phi_{z} (\h{\G}^{r(\h{\alpha}, T)-1} \h{\alpha}, T - \BS{\varphi}{r(\h{\alpha}, T)-1} (\h{\alpha}))=\\&=\left\{\begin{array}{l} \left( \h{\G}^{r(\h{\alpha}, T)-1} \h{\alpha}, T+z  - \BS{\varphi}{r(\h{\alpha}, T)-1} (\h{\alpha})\right) \quad 
\left\{ \begin{array}{l}\mathrm{if} \,\, \BS{\varphi}{r(\h{\alpha}, T)}(\h{\alpha}) > T+z ;
\\ \mathrm{and}  \, r(\h{\alpha}, T)= r( \h{\alpha}, T+z);  \end{array} \right. \\ 
\left(\h{\G} ^{r-1} \h{\alpha} , T+z  - \BS{\varphi}{r-1} (\h{\alpha}) \right)
\quad
\left\{ \begin{array}{l} \mathrm{if} \,\, \BS{\varphi}{r(\h{\alpha}, T)}(\h{\alpha}) \leq  T+z 
\\ \mathrm{and} \, r=r(\h{\alpha}, T+z ) >  r(\h{\alpha}, T) .\end{array} \right. 
\end{array} \right.
     \end{split}
\ee
Since $r> r(\h{\alpha}, T)$ and $\BS{\varphi}{r}$ is increasing in $r$ and  by definition of $U_{\epsilon}$, both  $ T+z  - \BS{\varphi}{r-1} \leq T+\delta  - \BS{\varphi}{r(\h{\alpha}, T)-1} (\h{\alpha})\leq \epsilon + \delta$, hence  $(\h{\alpha}, z) \in  U_{\epsilon+\delta}^\delta  $. 

Reasoning in a similar way, let us also show that if  $\h{\alpha}\in U_{-\epsilon}\cap \Ci$, then for each $0\leq z < \delta$, we have $(\h{\alpha}, z) \in   U_{-\epsilon}^\delta \cup U_{\delta}^\delta $. Indeed, from (\ref{cases}), in the first case $T+z -\BS{\varphi}{r(\h{\alpha}, T)-1} < \varphi(\h{\G}^{r(\h{\alpha}, T)-1}) (\h{\alpha}) $ and since  $\h{\alpha}\in U_{-\epsilon}$, also $T+z -\BS{\varphi}{r(\h{\alpha}, T)-1}  \geq T -\BS{\varphi}{r(\h{\alpha}, T)-1} \geq \varphi(\h{\G}^{r(\h{\alpha}, T)-1}) (\h{\alpha})  -\epsilon $, so    $(\h{\alpha}, z) \in   U_{-\epsilon}$;  while in the second case, since $r-1 \geq r(\h{\alpha}, T)$, we have $\BS{\varphi}{r}(\h{\alpha})\geq \BS{\varphi}{r(\h{\alpha}, T)}(\h{\alpha}) > T$ and hence  $0\leq T+z  -\BS{\varphi}{r}  < z < \delta$, so  $(\h{\alpha}, z) \in   U_{\delta}$. 

Let $\Ci'_{\delta} = \Ci' \times [0,\delta)$. Hence, recalling also (\ref{inclusion}), we proved that 
\bes
(U\cap \Ci') \times [0,\delta) \subset ((U_{\epsilon} \cup U_{-\epsilon })\cap \Ci' )\times [0,\delta) \subset \Ci'_\delta \cap  U_{-\epsilon}^\delta \cup U_{\delta+\epsilon}^\delta = \Ci'_{\delta}\cap \Phi_{-T} (D_{\Phi}^{-\epsilon} \cup D_{\Phi}^{\delta+\epsilon} ) .
\ees
Considering the measures of the above sets and using mixing (Proposition \ref{mixing}), one can find $T_0$ such that, as soon as $T\geq T_0$, 
 we have
\be\label{mixingcons}
 \begin{split}
\mu_2(U\cap \Ci')\delta  = 
\mu_3 (U\cap \Ci' \times [0,\delta) )& \leq \mu_3 (\Ci'_{\delta}\cap \Phi_{-T} (D_{\Phi}^{-\epsilon} \cup D_{\Phi}^{\delta+\epsilon} ) ) \leq  \\ &
\leq 2\mu_2(\Ci')\delta \mu_3( D_{\Phi}^{-\epsilon} \cup D_{\Phi}^{\delta+\epsilon}  )  \leq 2 \mu_2(\Ci') \delta (3\epsilon) 
,    \end{split}
\ee
where the last inequality follows from the fact that $\mu_3( D_{\Phi}^{\pm\epsilon} )\leq \epsilon$ and gives $\mu_2(U\cap \Ci') \leq 6\epsilon \mu_2(\Ci')$. Enlarging $R_0$ if necessary so that $\ln R_0 -f_{\Ci}\geq T_0$, if $R\geq R_0$ also $T\geq T_0$ and (\ref{mixingcons}) holds. Hence  
 $\mu_2(U\cap \Ci)\leq  \mu_2(U\cap \Ci\backslash \Ci') + 6\epsilon  \mu_2(\Ci) \leq 7\epsilon \mu_2(\Ci)$, concluding the proof of the Lemma.
\end{proof}

\section{Existence of the limiting distribution.}\label{limitingsec}
\begin{proofof}{Theorem}{main}
Assume $b> a >1$ and $c_k\in \mathbb{N}_+$, $0\leq k< N$. We want to estimate the expression (\ref{limiting}). 
Recalling Remark \ref{n_Rfuture}, as soon as $n_R({\alpha})>N$ 
we can rewrite the condition $ a_{n_R({\alpha})-k} = c_k, \, 0\leq k < N $ in an equivalent way as 
\be\label{conditionona}
\h{\G}^{n_R(\h{\alpha})-1}\h{\alpha} \in C_N, \qquad \mathrm{where} \quad \h{\alpha}^+=\alpha, \quad C_N := \h{\G}^{N-1} \hC{c_{N-1,} c_{N-2} \dots, c_0 },
\ee
since if (\ref{conditionona}) holds,  $\{ a'_j \}_{j\in \mathbb{Z}} :=  \h{\G}^{n_R(\h{\alpha})-N} \h{\alpha} = \h{\G}^{-(N-1)} \h{\G}^{n_R(\alpha)-1}\h{\alpha}\in \hC{ c_{N-1} \dots, c_0 } $, so  $a_k'= c_{N-k}$ for $1\leq j \leq N$ and  $a'_j = a_{n_R(\alpha)-N+ j}$ by definition of $\h{\G}$, 
hence $a_{n_R(\alpha)-N+ j} = c_{N-k}$ for $1\leq j \leq N$ gives the desired set of equalities for $k=N-l$.

Given two functions $g_1, g_2$ on $D(\h{\G})$, $g_1\leq g_2$, let us denote by $D_{\Phi}(g_1,g_2)$ the following subsets: 
\bes
D_{\Phi}(g_1,g_2) = \{ (\h{\alpha}, y) \in D_{\Phi} \st \varphi(\h{\alpha}) - g_2(\h{\alpha}) < y < \varphi(\h{\alpha}) - g_1 (\h{\alpha}) \}. 
\ees
Notice that for some values of $g_1( \h{\alpha})$, $g_2( \h{\alpha})$, the  corresponding set of $y$ can be empty. Also, let $p(x,y)=x$ be the projection to the base of the special flow. 
\begin{rem}\label{inclusionsDgs}
If $g_1'\leq g_1$ and $g_2'\geq g_2$, then $D_{\Phi}(g_1,g_2)\subset D_{\Phi}(g_1',g_2')$.
\end{rem}

We will show that the limiting distribution (\ref{limiting}) exists and is given by 
\be\label{limitingdistribution}
P_N \left( (a,b) \times \{ c_0\} \times \dots \times \{ c_{N-1}\} \right) = \mu_3 \left(  D_{\Phi}(\ln a,\ln b)  \cap p^{-1} C_N \right).
\ee
Take  $\epsilon>0$. 
For each $n\in \mathbb{N}$, the cylinders $\{ {\Ci} \st \Ci\in \Cn \}$, constitute a countable partition of $D(\h{\G})$.
Choose  $n$  so large that, by  Lemma \ref{variationflemma}, we have $| f (\h{\alpha}_1) -   f (\h{\alpha}_2) |\leq \epsilon/2$ for all $\h{\alpha}_1, \h{\alpha}_2 \in \Ci$. Let 
\bes
A_{\Ci } := \left\{ \h{\alpha} \in {\Ci} \st   a < \frac{q_{n_R(\h{\alpha})}(\h{\alpha} )}{R} < b, \quad \h{\G}^{n_R(\h{\alpha})-1}\h{\alpha} \in C_N \right\}.
\ees
By the second part of Lemma \ref{badpoints}, there exist $R_1>0$ such that if $R\geq R_1$ we have  $n_R(\alpha) > N $ for any $\alpha$ outside a set of $\mu_1$-measure less than $\epsilon$. Hence, by (\ref{conditionona}) (and Remarks \ref{mu2pimu1} and \ref{n_Rfuture}), we have
\bes
 \left| \mu_1  \left( \alpha  \st a < \frac{q_{n_R(\alpha)}}{R} < b,  \quad a_{n_R(\alpha)-k} = c_k, \, 1\leq k \leq N  \right) -
\sum_{\Ci\in \Cn} \mu_2( A_{\Ci }) \right| \leq 2\epsilon .
\ees
Let us first reduce to a finite sum. Consider the finite subset  $\Cn^m$ of cylinders $\Ci=\C{a_{-n},\dots ; \dots, a_n}$ such that $a_i< m$ for $-n\leq i\leq n$. Since, if $\Ci \in \Cn\backslash \Cn^m$, there exists $-n\leq i\leq n$ and $k\geq m$ such that $a_i =k$, we have,  using Remark \ref{measureCn} and invariance of $\mu_2$,
\be \label{remindersum}
\sum_{\Ci\in \Cn \backslash \Cn^m} \mu_2({\Ci}  ) \leq   \sum_{i=-n}^{n} \sum_{k=m}^{\infty} \mu_2 \left( \h{\G}^{i} \hC{k} \right) \leq (2n+1)  \sum_{k=m}^{\infty} O\left(\frac{1}{k^2}\right) = O\left(\frac{1}{m}\right). 
\ee
Hence, choosing $m$ large enough, we can make (\ref{remindersum}) less than $\epsilon$. To each $\Ci \in \Cn^m$ we can apply Lemma \ref{comparing} and hence, for $R\geq \max_{\Ci \in \Cn^m} R_0 (\Ci)$ (where $R_0(\Ci)$ and $U_{\Ci}$ are as in Lemma \ref{comparing}) we get 
\begin{eqnarray}&& \label{partition}
\left|  \mu_1 \left( \alpha  \st a < \frac{q_{n_R(\alpha)}}{R} < b,  \quad a_{n_R(\alpha)-k} = c_k, \, 1\leq k \leq N  \right) - \sum_{\Ci\in \Cn^m } \mu_2 (A_{{\Ci} \backslash U_{\Ci} })\right| \leq
\nonumber \\&&\nonumber
\leq 2\epsilon + \left| \sum_{\Ci\in \Cn \backslash \Cn^m} \mu_2 (A_{\Ci} ) + \sum_{\Ci\in  \Cn^m} \mu_2 (A_{\Ci \cap U_\Ci }) \right| \leq 3 \epsilon + 7\epsilon 
 \sum_{\Ci\in \Cn^m} \mu_2 ({\Ci} ) \leq  10 \epsilon ,
\end{eqnarray}
where the inequality  before the last follows from the observation that $A_\Ci\subset \Ci$, (\ref{remindersum})  and Lemma \ref{comparing}.

To conclude the proof and to get (\ref{limitingdistribution}), it is enough to prove that, for each $\Ci\in \Cn^m$, as long as $R$ is sufficiently large, we have
\be \label{conditional}
\left|  \frac{\mu_2 (A_{\Ci \backslash U_{\Ci} } ) }{\mu_2 ({\Ci \backslash U_{\Ci} } )}
 -   \mu_3 \left(   D_{\Phi}(\ln a,\ln b) \cap p^{-1} C_N \right) \right| \leq \epsilon .
\ee
Fix $\Ci \in \Cn^m $ and consider on $\Ci$ the function  $T= T(R) = \ln R - f_{{\Ci}} $ (recall that $f_{{\Ci}}= \sup_{{\Ci}} f $) and  $U=U_{\Ci}$ given by Lemma \ref{comparing}.  
Since  by Lemma \ref{comparing},  for $R\geq R_0(\Ci)$, on $\Ci\backslash U$ we have $n_R(\h{\alpha})=r(\h{\alpha}, T)$, applying (\ref{errorf}) of Lemma \ref{logqnandBS}, we get 
\bes \begin{split} &
 \left\{ \h{\alpha} \in \Ci \backslash U \st a < \frac{q_{n_R(\h{\alpha})}}{R} < b \right\} =  \{ \h{\alpha} \in {\Ci} \backslash U \st \ln a < \ln q_{r(\h{\alpha}, T)}(\h{\alpha})-\ln R < \ln b  \} = \\ & = \{ \h{\alpha} \in {\Ci} \backslash U  \st \ln a  <  \BS{\varphi}{r(\h{\alpha}, T)} (\h{\alpha}) -T + \epsilon_{R,\Ci}(\h{\alpha}) < \ln b \},
     \end{split}
\ees
where we denoted by $\epsilon_{R,\Ci}(\h{\alpha})= \epsilon_{n_R(\h{\alpha})}(\h{\alpha}) - f_{\Ci} + f(\h{\alpha}) $. Let us show that $| \epsilon_{R,\Ci}|\leq 2 \epsilon$  uniformly on  $ \Ci \backslash U $; indeed, by construction of $\Ci$, $ | f(\h{\alpha}) - f_{\Ci} |\leq \epsilon$;  moreover, since by (\ref{r(T)inequality}) and Remark \ref{infsup}, $\ln R - f_{\Ci}  <  \BS{\varphi}{r(\h{\alpha}, T)} (\h{\alpha}) \leq M(\Ci)r(\h{\alpha}, T)  $, on  $\Ci\backslash U$ we have $n_R(\h{\alpha})=r(\h{\alpha}, T)\geq (\ln R - f_{\Ci})/M(\Ci)$, so enlarging $R_0(\Ci)$ we can assure that for each $\h{\alpha}\in \Ci\backslash U$ and $R\geq R_0$,  $n_R(\h{\alpha})$ is so large that, by Lemma \ref{logqnandBS}, also  $ \epsilon_{n_R(\h{\alpha})}(\h{\alpha})\leq \epsilon$.

Let us denote by $(\Phi_t(x,y))^v$ the vertical component $y'$ of $\Phi_t(x,y)=(x',y')$. 
Using the definition of the flow action  (\ref{flowdef}) and the equality $n_R(\h{\alpha})=r(\h{\alpha}, T)$, 
 we can rewrite 
\be \label{distance}
 \BS{\varphi}{r(\h{\alpha}, T)} (\h{\alpha}) -T = \varphi(\h{\G}^{n_R(\alpha)-1}\h{\alpha}) -\left( \Phi_{T} (\h{\alpha},0)\right)^v .
\ee 
Remark that $\h{\G}^{n_R(\h{\alpha})-1}\h{\alpha} = p (\Phi_{T} (\h{\alpha},0) )$ and that 
condition  (\ref{conditionona}) can be expressed as $p (\Phi_{T} (\h{\alpha},0) )\in C_N$. 
The quantity (\ref{distance}) represents geometrically the vertical distance of $\Phi_{T} (\h{\alpha},0)$ from the roof function.

Hence, recalling the definitions of the sets $D_{\Phi}(g_1,g_2)$ and $C_N$ given at the beginning of the proof, 
  we have shown that 
\bes 
A_{\Ci\backslash U } :=
{\Ci} \backslash U \times \{0\}
\, \cap \, \Phi_{-T} \left( D_{\Phi}(g_1, g_2) \cap p^{-1}{C}_N \right), \qquad g_1 = \ln a -\epsilon_{R,\Ci}, \quad g_2= \ln b -\epsilon_{R,\Ci}.
\ees
Let us ``thicken'' the sets in order to apply mixing of $\{\Phi_t\}_{t\in \mathbb{R}}$ as in the proof of Lemma \ref{comparing}.
 If we choose $0< \delta < \min \{ \min_{\h{\alpha}\in \Ci} \varphi(\h{\alpha}), \epsilon\} $ (well defined by Remark \ref{infsup}), for each $ \h{\alpha} \in A_{\Ci\backslash U}$ and $0\leq z < \delta$, reasoning as in the proof of Lemma \ref{comparing} (see e.g. (\ref{cases})), we get  $(\h{\alpha} ,z ) \in  \Phi_{-T} \left(  D_{\Phi}(g_1-\delta , g_2) \cap p^{-1}C_N  \cup  D_{\Phi}^{\delta}\right) $. Hence, using also Remark \ref{inclusionsDgs} combined with  $\delta\leq \epsilon $ and  $|\epsilon_{R,\Ci} |\leq 2\epsilon$,
\bes 
\delta \mu_2(A_{\Ci\backslash U}) 
 \leq  \mu_3  \left( {\Ci} \backslash U \times [0,\delta) \cap  \Phi_{-T} \left(  D_{\Phi}(\ln a + 3\epsilon , \ln b - 2 \epsilon) \cap p^{-1}C_N  \cup  D_{\Phi}^{\delta}\right) \right).
\ees
Remark that $\mu_3({D_{\Phi}^\delta})\leq \epsilon$. Enlarging $R_0$, so that  if $R\geq R_0$ also $T(R)$ is sufficiently large, we can use  mixing (see Proposition \ref{mixing}) to get
\be \label{upper}
\delta \mu_2(A_{\Ci\backslash U}) \leq \delta \mu_2 ( {\Ci} \backslash U ) \left( \mu_3 \left(  D_{\Phi}(\ln a + 3\epsilon , \ln b - 2 \epsilon) \cap p^{-1}C_N \right) + 2\epsilon \right) .\ee
In order to get the opposite inequality, one can show,  reasoning again as in the proof of Lemma \ref{comparing},  that if $(\h{\alpha} ,z ) \in  {\Ci} \backslash U \times [0,\delta)$ is such that $(\h{\alpha} ,z ) \in \Phi_{-T} \left(  D_{\Phi}(g_1 , g_2-\delta ) \cap p^{-1}C_N  \backslash  D_{\Phi}^{\delta}\right) $, we have that $\h{\alpha} \in A_{\Ci\backslash U}$. This means, using again also Remark \ref{inclusionsDgs}, that 
\bes
  {\Ci} \backslash U \times [0,\delta) \cap \Phi_{-T} \left(  D_{\Phi}(\ln a -3\epsilon , \ln b + 2 \epsilon) \cap p^{-1}C_N  \backslash  D_{\Phi}^{\delta}\right) \subset A_{\Ci\backslash U} \times [0,\delta) .
\ees
Applying again mixing, enlarging again $R_0$ if necessary, for $R\geq R_0$, (and using that for any measurable $D\subset D_{\Phi}$, we have $\mu_3\left( D\backslash  D_{\Phi}^{\delta}\right) \geq \mu_3(D) - \epsilon$) we get
\be \label{lower}
\delta \mu_2(A_{\Ci\backslash U}) \geq \delta  \mu_2 ( {\Ci} \backslash U ) \left( \mu_3  \left(  D_{\Phi}(\ln a -3\epsilon , \ln b + 2 \epsilon) \cap p^{-1}C_N \right)
- 2 \epsilon 
\right).
\ee
Since moreover, by Fubini theorem,
\bes \left| \mu_3 \left(  D_{\Phi}(\ln a \pm 3 \epsilon , \ln b \mp 2 \epsilon) \cap p^{-1}C_N \right) - \mu_3 \left(  D_{\Phi}(\ln a  , \ln b ) \cap p^{-1}C_N \right) \right| \leq 5 \epsilon ,
\ees
combining  (\ref{upper}) and (\ref{lower}) we get (\ref{conditional}) and hence conclude the proof of the existence of the limiting distribution.
\end{proofof}

\section{Mixing of the special flow.} \label{mixingsection}
In what follows we briefly outline the proof of Proposition \ref{mixing} given in \cite{DS:sta}. 
\begin{proofof}{Proposition}{mixing}
Given a point $(\h{\alpha}_0, y_0)\in D_{\Phi}$, let us construct the local stable and unstable leaves through it, denoted by $\Gamma^{(s)}_{loc}(\h{\alpha}_0, y_0)$ and $\Gamma^{(u)}_{loc}(\h{\alpha}_0, y_0)$ respectively (as a general reference, see e.g. \cite{Si:top}).

Since the roof function $\varphi(\h{\alpha})$ depends only on $(\h{\G}\h{\alpha})^-$, it is easy to construct the local unstable leaf, which is given by a piece of a ``horizontal'' segment:
\be\label{locstab}
\Gamma^{(u)}_{loc}(\h{\alpha}_0, y_0) \subset \{ (\h{\alpha}, y) \st \h{\alpha}^-=\h{\alpha}_0^-, \,  y=y_0 \}.
\ee
The local stable leaf through $(\h{\alpha}_0, y_0)$ is given locally by the following curve parametrized by $\alpha^-$:
\be \label{locunst}
\Gamma^{(s)}_{loc}(\h{\alpha}_0, y_0) \subset \left\{ (\h{\alpha}, y) \st \h{\alpha}^+=\h{\alpha}_0^+, \,  y= y_0 + \ln \frac{1+ \h{\alpha}^- \h{\alpha}^+_0}{1+ \h{\alpha}_0^- \h{\alpha}^+_0} \right\}.
\ee
In order to see it, one can construct it as follows. 
Let us denote by $(\h{\alpha}_t, y_t) = \Phi_t (\h{\alpha}_0, y_0)$. Consider a small ``vertical'' segment at  $(\h{\alpha}_t, y_t)$, i.e.
\bes
\Gamma^t_{\delta_t}  = \{ (\h{\alpha}, y_t) \st \h{\alpha}^+=\h{\alpha}_t^+, \,  |\h{\alpha}^- - \h{\alpha}_t^-|< \delta_t \},
\ees
where $\delta_t$ is chosen sufficiently small so that, for some $\delta>0$,
\bes
\Phi_{-t} (\Gamma^t_{\delta_t}  ) \subset \{ (\h{\alpha}, y) \st \h{\alpha}^+=\h{\alpha}_0^+,\,  | y-y_0| < \delta,  \, 0< y_0 -\delta < y < \varphi(\h{\alpha_0}) -\delta  \}.
\ees
Then, if $(\h{\alpha} , y)\in \Phi^{-t} (\Gamma^t_{\delta_t}  )$, by definition of special flow, since $r(t)(\h{\alpha}) = r(t)(\h{\alpha_0}) = r(t)$ by construction, we have $y - \BS{\varphi}{r(t)}(\h{\alpha}) = t = y_0 - \BS{\varphi}{r(t)}(\h{\alpha}_0)$.
Denote $\alpha_0^+= \{a^0_k\}_{k\in \mathbb{N}_+}$ and ${p_n}/{q_n}$ 
 its convergents. Let 
\be\label{betas}
\beta':=[a^0_1 + \h{\alpha}^-_0, a^0_2, \dots, a^0_{r(t)} ]=  \frac{1}{\h{\alpha}^-_0 + \frac{q_{r(t)}}{p_{r(t)}}}, \, \, \beta'':=[a^0_1 + \h{\alpha}^-, a^0_2, \dots, a^0_{r(t)} ]= \frac{1}{\h{\alpha}^- + \frac{q_{r(t)}}{p_{r(t)}}}
\ee
and  
${p'_n}/{q'_n}$ and ${p''_n}/{q''_n}$ 
their respective convergents.  Remark that $p'_{n }= p''_{n} = p_{n} $ for  $1\leq n\leq r(t)$ since they satisfy the same recursive equations  $p_{k+1} = a_{k+1} p_k + p_{k-1}$  for $2\leq k+1\leq r(t)$ with  initial data $p_0=0$, $p_1=1$. Hence, $\beta'= \frac{p_{r(t)}}{q'_{r(t)}}$ and  $\beta''= \frac{p_{r(t)}}{q''_{r(t)}}$.
Using (\ref{logqn}, \ref{BS})  and (\ref{betas}), one gets
\bes
y= y_0 +  \BS{\varphi}{r(t)}(\h{\alpha}) -  \BS{\varphi}{r(t)}(\h{\alpha}_0) = y_0 + \ln \frac{q''_{r(t)}}{q'_{r(t)}} = y_0 + \ln \frac{1+ \h{\alpha}^- \frac{p_{r(t)}}{q_{r(t)}}}{1+ \h{\alpha}_0^-  \frac{p_{r(t)}}{q_{r(t)}}} .
\ees
 As $t$, and hence $r(t)$, tend to infinity, $ \frac{p_{r(t)}}{q_{r(t)}}$ converge to $\alpha^+_0$ and we get (\ref{locunst}).

The global unstable and stable leaves can be obtained as
\bes
\Gamma^{(u)}(\h{\alpha}_0, y_0)  = \bigcup_t \Phi_{t} \Gamma^{(u)}_{loc}(\h{\alpha}_{-t}, y_{-t}); \qquad \Gamma^{(s)}(\h{\alpha}_0, y_0)  = \bigcup_t \Phi_{-t} \Gamma^{(s)}_{loc}(\h{\alpha}_t, y_t).
\ees

To prove mixing, it is enough to show that the stable and unstable foliations form a non-integrable pair. From their non-integrability, it follows from the general theory  (see \cite{Si:top}) that the Pinsker partition is trivial and hence that $\{\Phi_t\}_{t\in \mathbb{R}}$ is a $K$-flow and, in particular,  is mixing.

Consider a sufficiently small neighborhood $\mathscr{U}(\h{\alpha_0}, y_0)\subset D_{\Phi}$ of $(\h{\alpha_0}, y_0)$. It is enough to show that, for a positive measure set of $(\h{\alpha}, y)\in \mathscr{U}(\h{\alpha_0}, y_0)$, $(\h{\alpha}, y)$ can be connected to $(\h{\alpha}_0, y_0)$ through a segments of local stable and unstable leaves, in particular if there exist  $(\h{\alpha}_i, y_i)\in \mathscr{U}(\h{\alpha_0}, y_0)$, $i=1,2$, such that $(\h{\alpha}_1, y_1) \in \Gamma^{(s)}(\h{\alpha_0}, y_0 )$, $(\h{\alpha}_2, y_2) \in \Gamma^{(u)}(\h{\alpha_1}, y_1 )$ and   $(\h{\alpha}, y) \in \Gamma^{(s)}(\h{\alpha_2}, y_2 )$.

Using explicitly the equations (\ref{locstab},\ref{locunst}), one can check that these points exist as soon as we can find $y_1$ and $\h{\alpha}_1^-$ such that $((\h{\alpha}_1^-, \h{\alpha}_0^+), y_1)\in \mathscr{U}(\h{\alpha_0}, y_0)$ and
\be\label{findalpha1}
y_1 = y_0 + \ln \frac{1+\h{\alpha}_1^- \h{\alpha}_0+}{1+\h{\alpha}_0^- \h{\alpha}_0+}, \qquad y = y_1 + \ln \frac{1+\h{\alpha}_1^- \h{\alpha}+}{1+\h{\alpha}^- \h{\alpha}+}, 
\ee
since in this case we can take $({\h{\alpha}}_1, y_1)  = ((\h{\alpha}_1^-, \h{\alpha}_0^+), y_1 )$ and $({\h{\alpha}}_2, y_2)  = ((\h{\alpha}_1^-, \h{\alpha}^+), y_1 )$. Equations (\ref{findalpha1}) can be solved if 
\be\label{hypsurf}
\frac{\h{\alpha}^+}{1+\h{\alpha}^- \h{\alpha}^+ } e^{y} \neq \frac{\h{\alpha}_0^+}{1+\h{\alpha}_0^- \h{\alpha}_0^+ } e^{y_0}.
\ee
The points $(\h{\alpha}, y)$ for which there is the equality in (\ref{hypsurf}) lie on a surface in $D_{\Phi}$ and hence have measure zero.
This concludes the proof of the non integrability. 
\end{proofof}

\section{Concluding Remark.}
Let $T$ be an ergodic automorphism of the measure space $(M,\mathscr{M}, \mu)$ and $f\in L^1(M,\mathscr{M}, \mu)$, $\int f\ud x >0$. The following problem is a generalization of a classical renewal problem in probability theory. Take $R>0$ and consider the first $n_R$ such that 
\bes f(x)+ f(Tx) + \dots + f(T^{n_R} x)>R.\ees
 What will be the limiting distribution of  $f(x)+ f(Tx) + \dots + f(T^{n_R} x)- R$ as $R$ tends to infinity?  
The answer can be given in terms of a special flow which is similar to the one considered above. Interesting aspects of this problem appear when $\int |f| \ud x = \infty$.

Results concerning the limiting distribution when considering the Gauss map and the sum of the entries of the continued fraction expansion can be found in \cite{KS:dis}.



\subsection*{Acnkowledgments.}
The first author thanks NSF Grant DMS $0600996$ for the financial support.

\bibliography{bibliorenewal}
\bibliographystyle{amsplain} 

\end{document}